\documentclass[12pt]{amsart}
\usepackage{a4wide,enumerate,xcolor}
\usepackage{amsmath,graphicx,comment,enumerate}
\usepackage{mathtools} 
\allowdisplaybreaks

\usepackage{enumitem}
\setlist[itemize]{label={$\bullet$}, leftmargin=12pt, itemsep=3pt}

\let\pa\partial
\let\na\nabla
\let\eps\varepsilon
\newcommand{\N}{{\mathbb N}}
\newcommand{\R}{{\mathbb R}}
\newcommand{\diver}{\operatorname{div}}

\newtheorem{theorem}{Theorem}
\newtheorem{lemma}[theorem]{Lemma}
\newtheorem{proposition}[theorem]{Proposition}

\newtheorem{definition}{Definition}

\begin{document}

\title[Chemotaxis compressible Navier--Stokes equations]{Chemotaxis compressible Navier--Stokes equations \\
with density-dependent viscosity \\
modeling vascular network formation}

\author[A. J\"ungel]{Ansgar J\"ungel}
\address{Institute of Analysis and Scientific Computing, TU Wien, Wiedner Hauptstra\ss e 8--10, 1040 Wien, Austria}
\email{juengel@tuwien.ac.at} 

\author[F. Philipp]{Flora Philipp}
\address{Institute of Analysis and Scientific Computing, TU Wien, Wiedner Hauptstra\ss e 8--10, 1040 Wien, Austria}
\email{flora.philipp@tuwien.ac.at} 

\date{\today}

\thanks{The authors acknowledge partial support from the Austrian Science Fund (FWF), grant 10.55776/F65, and from the Austrian Federal Ministry for Women, Science and Research and implemented by \"OAD, project MultHeFlo. This work has received funding from the European Research Council (ERC) under the European Union's Horizon 2020 research and innovation programme, ERC Advanced Grant NEUROMORPH, no.~101018153. For open-access purposes, the authors have applied a CC BY public copyright license to any author-accepted manuscript version arising from this submission.} 

\begin{abstract}
The existence of global weak solutions to the compressible Navier--Stokes equations for the density of endothelial cells and their velocity, coupled to a reaction--diffusion equation for the concentration of the chemoattractant, is established in a three-dimensional torus for energy-finite initial data. The coupling of the equations arises through the chemotaxis force, which contributes to the momentum balance equation, and the signal production due to the cells in the chemotaxis equation. The equations model the self-assembly of endothelial cells during the early stages of blood vessel formation. The existence result holds for adiabatic pressure exponents $\gamma>4/3$, matching the exponent found in the existence analysis for the degenerate Keller--Segel equations. The proof leverages an approximation via Korteweg and drag terms, the BD entropy inequality, and a construction of weak solutions that are renormalized in the velocity variable.
\end{abstract}

\keywords{Compressible Navier--Stokes equations, chemotaxis force, compressible Navier--Stokes--Korteweg equations, existence of global weak solutions, renormalized solutions.}  
 
\subjclass[2000]{35Q30, 35K57, 35Q92, 76N06, 92C37.}

\maketitle


\section{Introduction}

In vasculogenesis, endothelial cells move through the extracellular matrix and interstitial fluid, guided by gradients of chemoattractants. The cells may self-assemble into a vascular network. Exhibiting fluid-type behavior and interface effects, the collective cell dynamics is described by mass transport, momentum balance, and chemotaxis forces. The mathematical model consists of the compressible Navier--Stokes equations for the cell density and velocity, coupled with a reaction--diffusion equations for the concentration of the chemoattractant. The existence of global weak solutions to this system for pressures with an adiabatic exponent $\gamma\ge 4$ was proved in \cite{AiAl16} and later extended to the range $\gamma>8/5$ in \cite{HuJu24}. The motivation of this paper is to investigate to what extent the admissible range of $\gamma$ can be extended by allowing for model variations. When density-dependent viscosities are assumed, we prove that we can choose $\gamma>4/3$. On the level of a Korteweg approximation, the full range $\gamma>1$ turns out to be admissible.

\subsection{Model equations}

The equations for the density $\rho(x,t)$ of the endothelial cells, their velocity $v(x,t)$, and the concentration $c(x,t)$ of the chemoattractant are given by
\begin{align}
  & \pa_t\rho + \diver(\rho v) = 0, \label{1.rho} \\
  &\pa_t(\rho v) + \diver(\rho v\otimes v) + \na p(\rho)
  = \nu\diver(\rho\mathbb{D}(v)) + \rho\na c - \frac{\rho v}{\zeta}, 
  \label{1.v} \\
  & \pa_t c = \Delta c - c + \rho\quad\mbox{in }\Omega,\ t>0,
  \label{1.c}
\end{align}
where $\mathbb{D}(v) = \frac12(\na v+\na v^T)$ is the symmetric part of the velocity gradient, $p(\rho) = \rho^\gamma$ is the pressure with the adiabatic exponent $\gamma>1$, $\nu>0$ is the viscosity constant, and $\zeta>0$ is the relaxation time. We assume that the domain equals the three-dimensional torus, $\Omega=\mathbb{T}^3$, and we impose the initial conditions
\begin{align}
  \sqrt{\rho}(0,\cdot) = \sqrt{\rho^0}, \quad 
  (\sqrt{\rho} v)(0,\cdot) = \sqrt{\rho^0}v^0, \quad
  c(0,\cdot) = c^0\quad\mbox{in }\Omega. \label{1.ic} 
\end{align}

Equations \eqref{1.rho}--\eqref{1.v} are the compressible Navier--Stokes equations with a density-depen\-dent viscosity and the chemotaxis force $\rho\na c$. Equation \eqref{1.c} is the chemical signal model including degradation of the signal and production due to the cells. Model \eqref{1.rho}--\eqref{1.c} was suggested in \cite[p.~1862]{ABS04} with constant viscosity. When the movement of the endothelial cells is modeled as a non-Newtonian fluid, the results of \cite{DMB23} suggest a degenerate viscosity that vanishes in vacuum to account for the nonstandard fluid behavior.

In the zero-inertia and zero-viscosity limit, equations \eqref{1.rho}--\eqref{1.c} reduce to the degenerate Keller--Segel equations $\pa_t\rho + \zeta\diver(\na p(\rho) - \rho\na c) = 0$ and $\pa_t c = \Delta c-c+\rho$. The existence of global classical solutions to this system was proved for functions $p(\rho)=\rho^\gamma$ with $\gamma>4/3$ \cite{TaWi12}, while there exist smooth solutions in case $\gamma\ge 4/3$ that blow up \cite{Win10}. This indicates that the range $\gamma>4/3$ is optimal to obtain a global existence result for \eqref{1.rho}--\eqref{1.ic}, and this result is our main contribution.

Without chemotaxis force, global weak solutions to the compressible Navier--Stokes equations with density-dependent viscosity were shown to exist for all $\gamma>1$ \cite{BrDe03,BrDe06}. This naturally raises the question of whether this range of $\gamma$ is possible in our model with chemotaxis force, allowing for a modification of the model. We demonstrate that this is achievable by introducing a Korteweg term of the form $\diver\mathbb{K}$, where the Korteweg tensor $\mathbb{K}$ is specifically given by $\mathbb{K} = (\kappa/2)\rho\na^2\log\rho$ with the capillarity constant $\kappa>0$. Then the Korteweg term becomes
\begin{align}\label{1.K}
  \diver\mathbb{K} = \kappa\rho\na\bigg(
  \frac{\Delta\sqrt\rho}{\sqrt\rho}\bigg).
\end{align} 
This expression can also be interpreted as a quantum force with the so-called Bohm potential $\Delta\sqrt\rho/\sqrt\rho$, used in quantum fluid modeling \cite{Jue10}. Although the presence of Korteweg terms may be motivated in the modeling of multiphase cellular fluids, we consider it as part of the approximation scheme for \eqref{1.rho}--\eqref{1.v}, introduced to establish the existence of weak solutions for $\kappa=0$. The existence of solutions to the compressible Navier--Stokes--Korteweg model with chemotaxis force for all $\gamma>1$ is a by-product of our analysis. 

\subsection{Mathematical tools}

We explain now the mathematical difficulties associated with the chemotaxis force. The idea of \cite{HuJu24}, where the viscosity was assumed to be constant, is to derive a priori estimates from the free energy
\begin{align}\label{1.E0}
  E_0(\rho,v,c) = \int_\Omega\bigg(\frac12\rho|v|^2 + h(\rho)
  + \frac12(|\na c|^2+c^2) - \rho c\bigg)dx,
\end{align}
where $h(\rho)=\rho^\gamma/(\gamma-1)$ is the internal energy density, $\frac12\rho|v|^2$ is the kinetic energy, and the remaining terms are associated to the chemotaxis energy. To prove that $E_0$ has a lower bound, we need an upper bound for $\rho c$. This was achieved by the inequality of Sugiyama \cite{Sug07}
\begin{align*}
  \int_\Omega\rho cdx \le \frac12\|h(\rho)\|_{L^1(\Omega)}
  + \frac14\|\na c\|_{L^2(\Omega)}^2
  + C_1(\gamma)\|c\|_{L^1(\Omega)}^{C_2(\gamma)},
\end{align*}
which requires the condition $\gamma>8/5$. The first two terms on the right-hand side can be absorbed by the energy and the $L^1(\Omega)$ of $c$ can be bounded in terms of the initial data $(\rho^0,c^0)$. We believe that the bound $\gamma>8/5$ cannot be easily improved. 

When the viscosity depends on the density as in \eqref{1.v}, Bresch and Desjardins discovered a new mathematical entropy that provides some additional regularity for the density. The so-called BD entropy
\begin{align}\label{1.BDent}
  E_{BD}(\rho,v) = \frac12\int_\Omega\rho|v+\nu\na\log\rho|^2 dx
\end{align}
yields an a priori bound for $\na\sqrt\rho$ in $L^2(\Omega)$ uniformly in time. It can be interpreted as the kinetic energy associated to the effective velocity, which is the sum of the fluid velocity $v$ and the osmotic velocity $\nu\na\log\rho$. By the Gagliardo--Nirenberg inequality (see Lemma \ref{lem.aux}), we can control the critical term $\rho c$ according to
\begin{align}\label{1.rhoc}
  \int_\Omega\rho cdx \le C(\delta) + \delta\int_\Omega(|\na\sqrt\rho|^2 
  + |\na c|^2)dx
\end{align}
for any $\delta>0$, and the right-hand side can be controlled by the BD entropy. 

Unfortunately, our situation is more involved. Indeed, we also need to estimate the term
\begin{align*}
  K_1 = \nu\int_0^t\int_\Omega\na\rho\cdot\na c dxd\tau
\end{align*}
arising from the chemotaxis force. We prove below (see the proof of \eqref{3.cdelta}) that
\begin{align*}
  K_1 \le C(c^0) + \int_0^t\int_\Omega\rho^2 dxd\tau,
\end{align*}
such that it is sufficient to estimate $\rho^2$. If $\gamma\ge 2$, the right-hand side can be estimated by the internal energy component of $E_0$. For $\gamma<2$, we need additional bounds derived from a combination of the free energy $E_0$ and the BD entropy $E_{BD}$. The associated dissipation contains, up to some factor, the dissipation
\begin{align*}
  D_1 = \frac{4}{\gamma}\nu\int_0^t\int_\Omega|\na\rho^{\gamma/2}|^2
  dxd\tau.
\end{align*}
Then, if $4/3<\gamma<2$, we use the inequality \begin{align}\label{1.rho2}
  \int_0^t\int_\Omega\rho^2 dxd\tau \le C(\delta)
  + \delta\int_0^t\int_\Omega|\na\rho^{\gamma/2}|^2 dxd\tau
\end{align}
for arbitrary $\delta>0$ (see Lemma \ref{lem.aux}), and the integral on the right-hand side can be absorbed by $D_1$. We already mentioned that the range $\gamma>4/3$ matches the bound for the degenerate Keller--Segel model \cite{TaWi12}, which can be seen as a limiting case of our model. 

To cover the full range $\gamma>1$, a stronger inequality is required:
\begin{align}\label{1.rho4}
  \int_0^t\int_\Omega\rho^2 dxd\tau \le C(\delta)
  + \delta\int_0^t\int_\Omega|\na\sqrt[4]{\rho}|^4 dxd\tau.
\end{align}
It is necessary to  regularize our equations to control the right-hand side of \eqref{1.rho4}. This is achieved by adding the Korteweg term \eqref{1.K}. Then a combination of $E_0$ and $E_{BD}$ yields the additional dissipation
\begin{align*}
  D_2 = \kappa\int_0^t\int_\Omega\rho|\na^2\log\rho|^2 dxd\tau,
\end{align*}
which can be estimated from below by
\begin{align}\label{1.qineq}
  D_2 \ge 2C_K\kappa\int_\Omega\big(|\Delta\sqrt\rho|^2 
  + |\na\sqrt[4]{\rho}|^4\big)dx \quad\mbox{with }C_K=\frac{1}{16},
\end{align}
proved in \cite[Appendix]{Jue10} and \cite[Lemma 2.1]{VaYu16}. This allows us to derive additional a priori estimates. In fact, for sufficiently large $M>0$, we show below that
\begin{align*}
  \frac{d}{dt}(E_0 + E_1 + ME_{BD}) + D \le C(\kappa)
  \quad\mbox{for }0<t<T,
\end{align*}
where $E_1=\kappa\int_\Omega|\na\sqrt\rho|^2dx$ is the Korteweg energy, $D$ is the dissipation containing gradient bounds, and the constant $C(\kappa)>0$ depends on $\kappa$ and the initial data. If $\gamma>4/3$, the constant can be chosen independently of $\kappa$, allowing for the limit $\kappa\to 0$. 

We detail now our approximation scheme. It is well known that the solution to the compressible Navier--Stokes equations requires a careful regularization strategy. We consider two regularization levels to solve \eqref{1.rho}--\eqref{1.c}. First, we add the Korteweg term \eqref{1.K} and some drag terms with parameters $(r_0,r_1)$, leading to the following chemotaxis compressible Navier--Stokes--Korteweg system:
\begin{align}
  & \pa_t\rho + \diver(\rho v) = 0, \quad
  \pa_t c = \Delta c - c + \rho\quad\mbox{in }\Omega,\ t>0, 
  \label{1a.rhoc} \\
  &\pa_t(\rho v) + \diver(\rho v\otimes v) + \na p(\rho)
  = \nu\diver(\rho\mathbb{D}(v)) + \rho\na c - \frac{\rho v}{\zeta}
  \label{1a.v} \\
  &\phantom{xx}+ \kappa\na\bigg(\frac{\Delta\sqrt\rho}{\sqrt\rho}\bigg)
  - r_0v - r_1\rho|v|^2v. \nonumber
\end{align}
Second, we add the diffusion $\eps\Delta\rho$ and some higher-order regularization with parameters $(\delta,\eta,\mu)$ as in \cite[Sec.~2]{VaYu16}; see Section \ref{sec.approx} below. 

The strategy of the proof is as follows. We first prove the existence of solutions to the regularized Navier--Stokes--Korteweg system with parameters $(\kappa,r_0,r_1;\delta,\eps,\eta,\mu)$ by means of a Faedo--Galerkin approximation with dimension $n\in\N$. A modified energy inequality yields estimates uniform in $n$ such that we can pass to the limit $n\to\infty$. Then we prove the BD entropy inequality, which provides additional uniform bounds allowing us to perform the limit $(\delta,\eps,\eta,\mu)\to 0$. This gives a weak solution to the compressible Navier--Stokes--Korteweg system \eqref{1a.rhoc}--\eqref{1a.v} for all $\gamma>1$. Following \cite{LaVa18}, we show that this solution is in fact a renormalized solution (and vice versa). The final step is the limit $(\kappa,r_0,r_1)\to 0$ leading to a weak solution to system \eqref{1.rho}--\eqref{1.ic} for $\gamma>4/3$, recalling that the restriction of the range of $\gamma$ originates from inequality \eqref{1.rho2}.

\subsection{State of the art}

The first result of the existence of global weak solutions to the three-dimensional compressible Navier--Stokes equations with constant viscosity and nonnegative initial data was proved by Lions \cite[Theorem 7.2]{Lio98} for $\gamma>9/5$. This result was extended by Feireisl and co-workers \cite{FNP01} to the range $\gamma>3/2$. The full range $\gamma>1$ seems to be admissible only under additional assumptions like symmetric initial data \cite{JiZh03}. These results rely on the condition that the viscosity coefficient is bounded from below by a positive constant to achieve uniform bounds on the gradient of the velocity.

When density-dependent viscosities are considered, the velocity cannot be defined when the density vanishes. This difficulty was first overcome by Bresch et al.\ \cite{BDL03} by proving the stability of a variant of the compressible Navier--Stokes--Korteweg equations, later improved by Bresch and Desjardins \cite{BrDe03} by removing the Korteweg term. This was possible thanks to a new mathematical entropy inequality, derived from the BD entropy \eqref{1.BDent}. With this information, the existence of global weak solutions in the presence of appropriate drag terms or singular pressure close to vacuum was obtained. These additional terms could be removed by Mellet and Vasseur \cite{MeVa07} thanks to a new Mellet--Vasseur inequality, giving a uniform estimate for $\int_\Omega\rho|v|^2\log(1+|v|^2)dx$ for smooth solutions. An approximation scheme was provided in \cite{LiXi15}, and the Mellet--Vasseur inequality for weak solutions was proved in \cite{VaYu16inv}. When the Korteweg term \eqref{1.K} is present, such an estimate cannot be expected, except for $\kappa<\nu$ \cite[Theorem 2.2]{AnSp18}. 

Another approach was introduced by Lacroix-Violet and Vasseur \cite{LaVa18}. The authors prove an existence result for the compressible Navier--Stokes equations with Korteweg term \eqref{1.K} and then perform the limit $\kappa\to 0$. The method is based on the construction of weak solutions that are renormalized in the velocity variable, first used in \cite{VaYu16inv} for the case $\kappa=0$. More general nonlinear density-dependent viscosities have been studied too, see, e.g., the work \cite{BVY22} without Korteweg term and \cite{ABS25} with Korteweg term.

As already mentioned, we will follow the approach of \cite{LaVa18} to prove the existence of weak solutions to \eqref{1.rho}--\eqref{1.ic}. The novelty of this paper lies in the treatment of the chemotaxis term, which poses significant challenges for small values of $\gamma>1$ due to its quadratic structure.


\subsection{Definitions}

We give the definitions of weak and renormalized solutions. To this end, we introduce the following tensors:
\begin{align}
  \sqrt{\kappa\rho}\mathbb{S}_\kappa
  &= \kappa\sqrt{\rho}\big(\na^2\sqrt{\rho} - 4\na\sqrt[4]{\rho}
  \otimes\na\sqrt[4]{\rho}\big), \label{1.Skappa} \\
  \sqrt{\nu\rho}\mathbb{T}_\nu &= \nu\na(\rho v) 
  - 2\nu\sqrt{\rho} v\otimes\na\sqrt{\rho}. \label{1.Tnu}
\end{align}
The Korteweg tensor $\sqrt{\kappa\rho}\mathbb{S}_\kappa$ satisfies
\begin{align*}
  \diver(\sqrt{\kappa\rho}\mathbb{S}_\kappa)
  = \kappa\rho\na\frac{\Delta\sqrt\rho}{\sqrt\rho}
\end{align*}
and accounts for the Korteweg regularization, while the tensor $\mathbb{T}_\nu$ equals $\sqrt{\nu\rho}\na v$, both for positive smooth functions $\rho$. The tensors are needed, since expressions like $1/\sqrt\rho$ and $\na v$ may be not defined. We also introduce the symmetric part of $\mathbb{T}_\nu$, $\mathbb{S}_\nu = \frac12(\mathbb{T}_\nu+\mathbb{T}_\nu^T)$ and we set $\Omega_T:=(0,T)\times\Omega$. 

Our a priori estimates, stated below, yield the following regularity for $\kappa\ge 0$:
\begin{equation}\label{1.regul}
\begin{aligned}
  & \rho\in L^\infty(0,T;L^\gamma(\Omega)), \quad
  \rho^{\gamma/2}\in L^2(0,T;H^1(\Omega)), \quad 
  \na\sqrt\rho,\, \sqrt{\rho}v\in L^\infty(0,T;L^2(\Omega;\R^3))\\
  & \mathbb{T}_\nu\in L^2(\Omega_T;\R^{3\times 3}), \quad 
  c\in L^\infty(0,T;H^1(\Omega))\cap H^1(0,T;L^2(\Omega)), \\
  & \sqrt{r_0}v\in L^2(\Omega_T),\quad
  \sqrt\kappa\Delta\sqrt\rho\in L^2(\Omega_T), \quad
  \na\sqrt[4]{\kappa\rho},\,\sqrt[4]{r_1\rho}v\in L^4(\Omega_T;\R^3).
\end{aligned}
\end{equation}

\begin{definition}[Weak solution to the chemotaxis Navier--Stokes--Korteweg system]\label{def.weak}
The \\ tri\-ple $(\sqrt\rho,\sqrt\rho v,c)$ is called a {\em weak solution} to \eqref{1a.rhoc}--\eqref{1a.v} on $[0,T]$ with initial conditions \eqref{1.ic} if 
\begin{align}
  & \int_0^T\int_\Omega\big(\rho\pa_t\phi + \rho v\cdot\na\phi
  \big)dxdt = 0, \label{1.wrho} \\
  & \int_0^T\int_\Omega\big(\rho v\cdot\pa_t\psi 
  + (\rho v\otimes v):\na\psi - \na p(\rho)\cdot\psi
  + \rho\na c\cdot\psi\big)dxdt
  \label{1.wv} \\
  &\phantom{xx}
  = \int_0^T\int_\Omega\bigg(\frac{\rho v}{\zeta}\cdot\psi
  + \big(\sqrt{\nu\rho}\mathbb{S}_\nu
  + \sqrt{\kappa\rho}\mathbb{S}_\kappa\big):\na\psi + r_0\rho v\cdot\psi
  + r_1\rho|v|^2 v\cdot\psi\bigg)dxdt, \nonumber \\
  & \int_0^T\int_\Omega\big(c\pa_t\phi - \na c\cdot\na\phi
  - c\phi + \rho\phi\big)dxdt = 0. \nonumber 
\end{align}
for all $\phi\in C_0^\infty(\Omega_T)$, $\psi\in C_0^\infty(\Omega_T;\R^3)$, $\mathbb{S}_\kappa$ is defined in \eqref{1.Skappa}, $\mathbb{S}_\nu=\frac12(\mathbb{T}_\nu+\mathbb{T}_\nu^T)$, $\mathbb{T}_\nu$ is given by \eqref{1.Tnu}, the regularity \eqref{1.regul} is satisfied, and the initial data \eqref{1.ic} is fulfilled in the sense of distributions.
\end{definition}

\begin{definition}[Weak solution to the chemotaxis Navier--Stokes 
system]
The triple $(\sqrt\rho,$ $\sqrt\rho v,c)$ is called a {\em weak solution} to \eqref{1.rho}--\eqref{1.c} on $[0,T]$ with initial conditions \eqref{1.ic} if it is a weak solution to \eqref{1.ic}, \eqref{1a.rhoc}--\eqref{1a.v} with $(\kappa,r_0,r_1)=0$.
\end{definition}

\begin{definition}[Renormalized solution to the chemotaxis Navier--Stokes--Korteweg system]\label{def.renorm}
\mbox{\,}\linebreak
The triple $(\sqrt\rho,\sqrt\rho v,c)$ is called a {\em weak solution} to \eqref{1a.rhoc}--\eqref{1a.v} on $(0,T)$ with initial conditions \eqref{1.ic} if for any $\varphi\in W^{3,\infty}(\R^3)$, there exists $C_1>0$ such that 
\begin{align}\label{1.varphi}
  |y_j\varphi(y)| + \bigg|y_j\frac{\pa\varphi}{\pa y_k}(y)\bigg|
  \le C_1\quad\mbox{for all }y\in\R^3,\ j,k=1,2,3;
\end{align}
there exist $C_2>0$, depending on the $L^2(\Omega_T)$ norm of $\mathbb{S}_\kappa$ and $\mathbb{T}_\nu$, and measures $R_\varphi$, $Q_\varphi^{ijk} \in \mathcal{M}(\Omega_T)$ such that
\begin{align*}
  \|R_\varphi\|_{\mathcal{M}(\Omega_T)}
  + \sum_{i,j,k=1}^3\|Q_\varphi^{ijk}\|_{\mathcal{M}(\Omega_T)} 
  \le C_2\|\varphi''\|_{L^\infty(\R^3)};
\end{align*}
the equations are satisfied in the sense
\begin{align}
  & \int_0^T\int_\Omega\big(\rho\pa_t\phi + \rho v\cdot\na\phi
  \big)dxdt = 0, \nonumber \\ 
  &\langle R_\varphi,\phi\rangle
  = \int_0^T\int_\Omega(\rho\varphi(v)\pa_t\phi
  + \rho\varphi(v)v\cdot\na\phi - \na p(\rho)\cdot\varphi'(v)\phi 
  + \rho\na c\cdot\varphi'(v)\phi)dxdt \nonumber \\ 
  &\phantom{xxxxxxx} - \int_0^T\int_\Omega\bigg(
  \frac{\rho v}{\zeta}\cdot\varphi'(v)\phi
  + \big(\sqrt{\nu\rho}\mathbb{S}_\nu 
  + \sqrt{\kappa\rho}\mathbb{S}_\kappa\big):(\varphi'(v)\otimes\na\phi)
   \bigg)dxdt \nonumber \\
  &\phantom{xxxxxxx}- \int_0^T\int_\Omega\big(
  r_0v\cdot\varphi'(v)\phi + r_1\rho|v|^2 v\cdot\varphi'(v)
  \phi\big)dxdt, \nonumber \\
  &\int_0^T\int_\Omega\big(c\pa_t\phi + c\Delta\phi
  - c\phi + \rho\phi\big)dxdt = 0 \nonumber 
\end{align}
for all $\phi\in C_0^\infty(\Omega_T)$; $\mathbb{S}_\kappa$ is defined in \eqref{1.Skappa}, $\mathbb{S}_\nu = \frac12(\mathbb{T}_\nu+\mathbb{T}_\nu^T)$, $\mathbb{T}_\nu$ is given by 
\begin{align}\label{1.T}
  \int_0^T\int_\Omega\sqrt{\nu\rho}\frac{\pa\varphi}{\pa v_i}(v)
  (\mathbb{T}_\nu)_{jk}\psi dxdt
  &= -\nu\int_0^T\int_\Omega\bigg(\rho\frac{\pa\psi}{\pa x_j}
  + 2\sqrt{\rho}\frac{\pa\sqrt\rho}{\pa x_j}\bigg)
  \frac{\pa\varphi}{\pa v_i}(v)v_k dxdt \\
  &\phantom{xx}+ \langle Q_\varphi^{ijk},\psi\rangle \nonumber 
\end{align}
for $i,j,k=1,2,3$ and $\psi\in C_0^\infty(\Omega_T)$; the regularity \eqref{1.regul} is satisfied; and the initial data is fulfilled in the sense of distributions.
\end{definition}


\subsection{Main results}

Let $\nu>0$ and $\zeta>0$. We impose the following conditions on the initial conditions:
\begin{align}\label{1.init}
  \rho^0\in L^\gamma(\Omega), \quad
  \sqrt{\rho^0},\ c^0\in H^1(\Omega), \quad
  \sqrt{\rho^0}v^0\in L^2(\Omega;\R^3), \quad
  r_0\log\rho^0\in L^1(\Omega). 
\end{align}
We first show the existence of global weak solutions to the chemotaxis compressible Navier--Stokes--Korteweg system.

\begin{theorem}[Existence for the chemotaxis Navier--Stokes--Korteweg system]\label{thm.nske}
Let $\kappa$, $r_0$, \\ $r_1>0$, $T>0$, $\gamma>1$, and let \eqref{1.init} hold. Then there exists a weak solution to \eqref{1.ic}, \eqref{1a.rhoc}--\eqref{1a.v} on $[0,T]$. If $\gamma > 4/3$, the regularity results \eqref{1.regul} are independent of $(\kappa,r_0,r_1)$.
\end{theorem}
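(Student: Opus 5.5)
The proof follows the two-level scheme announced in the introduction, modelled on \cite{VaYu16} and \cite{LaVa18}. The only genuinely new ingredient is the control of the chemotaxis terms in the combined energy--BD entropy estimate.

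\emph{Step 1: regularized problem.} We add $\eps\Delta\rho$ to the continuity equation and the higher-order terms with parameters $(\delta,\eta,\mu)$ as in \cite[Sec.~2]{VaYu16}. We then solve the coupled system with the parabolic equation for $c$ by a Faedo--Galerkin scheme in $v$ of dimension $n$. For fixed $v_n$, the density and the signal are obtained from the (uniformly) parabolic equations for $\rho$ and $c$. A fixed-point argument gives local solutions.

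\emph{Step 2: energy estimate.} Testing the momentum equation by $v$, the $\rho$-equation by $h'(\rho)-c$, and the $c$-equation by $\pa_t c$ yields
\[
\frac{d}{dt}E_0+\text{dissipation}\le \text{($\eps$-error terms)}.
\]
The cross term $\int\rho c\,dx$ is bounded with \eqref{1.rhoc}, absorbing $\delta\|\na\sqrt\rho\|_2^2$ via the Korteweg energy $E_1$ (on this level) and $\delta\|\na c\|_2^2$. This gives bounds uniform in $n$, so the solutions are global, and compactness (Aubin--Lions for $\rho$, $c$, and $\rho v$) lets us pass $n\to\infty$.

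\emph{Step 3: BD entropy.} We derive the BD entropy inequality for the regularized system by testing with $\na\log\rho$, following \cite{VaYu16}. The chemotaxis force then produces $K_1=\nu\int\int\na\rho\cdot\na c$. We bound it by integrating by parts to $-\nu\int\int\rho\Delta c$ and replacing $\Delta c=\pa_t c+c-\rho$. The term $\int\int\rho\,\pa_t c$ is handled via $\int\rho c$ at the endpoints plus $\int\int c\,\pa_t\rho=\int\int\rho v\cdot\na c$, which is estimated by the energy. This leaves
\[
K_1\le C(c^0)+\int_0^t\!\!\int_\Omega\rho^2\,dx\,d\tau .
\]
The Korteweg term yields the dissipation $D_2$, which by \eqref{1.qineq} controls $\|\na\sqrt[4]\rho\|_{L^4}^4$. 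Hence for any $\gamma>1$ the term $\int\int\rho^2$ is absorbed through \eqref{1.rho4}, with a constant depending on $\kappa$. If $\gamma>4/3$, we instead use \eqref{1.rho2} and absorb into $D_1=\frac4\gamma\nu\int\int|\na\rho^{\gamma/2}|^2$, which is independent of $\kappa$. Combining as $E_0+E_1+ME_{BD}$ with $M$ large gives the Gronwall-type inequality $\frac{d}{dt}(E_0+E_1+ME_{BD})+D\le C$ and hence all bounds in \eqref{1.regul}. The drag terms supply $\sqrt{r_0}v\in L^2$ and $\sqrt[4]{r_1\rho}v\in L^4$. The $r_0\log\rho$ contribution to the BD entropy is controlled by the assumption $r_0\log\rho^0\in L^1$.

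\emph{Step 4: limits.} We let $(\delta,\eps,\eta,\mu)\to0$ using these uniform bounds, as in \cite{VaYu16,LaVa18}. Strong convergence of $\sqrt\rho$ in $L^2(0,T;H^1)$ follows from the $\Delta\sqrt\rho$ and $\na\sqrt[4]\rho$ bounds. The convection term converges because $\sqrt\rho v\to\sqrt\rho v$ strongly in $L^2$, using the $r_1$ drag (uniform $L^4$ bound on $\sqrt[4]\rho v$) together with a.e. convergence. The chemotaxis term $\rho\na c$ passes to the limit since $\rho\to\rho$ strongly and $c$ converges strongly in $L^2(0,T;H^1)$ by parabolic compactness. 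The tensors $\mathbb{S}_\kappa$ and $\mathbb{T}_\nu$ are identified through their definitions \eqref{1.Skappa} and \eqref{1.Tnu} as weak limits. The a.e. convergence of the velocity is the main technical point, but it is standard with the drag term.

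The main obstacle is Step 3, the BD estimate with the chemotaxis coupling. One must carefully check that the $\eps$-regularization errors and the $\rho\pa_t c$ term close, and that for $\gamma>4/3$ no constant depends on $\kappa$, $r_0$, $r_1$.
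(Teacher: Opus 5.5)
Your outline is the paper's proof in all essentials. It uses the same $(\delta,\eps,\eta,\mu)$-regularization and Galerkin scheme, an energy estimate in which $\int_\Omega\rho c\,dx$ is absorbed by the Korteweg energy, and the BD entropy with $K_1$ reduced to $\nu\int_0^t\int_\Omega\rho^2$. It also uses the same case distinction of Lemma~\ref{lem.aux} ($\na\sqrt[4]{\rho}$-dissipation for $\gamma>1$, $D_1$ for $\gamma>4/3$) and the limit passage as in \cite{VaYu16}.

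Two steps, however, do not work as written. The first is $K_1$. You trade $\int_0^t\int_\Omega\rho\,\pa_\tau c$ back for the chemotaxis work $\int_0^t\int_\Omega\rho v\cdot\na c$ and say it is ``estimated by the energy''. That term has no sign and is not small relative to the dissipation, so there are only two ways to bound it:
\begin{itemize}
\item quote your Step~2 estimate, which depends on $\kappa$ (there $\int\rho c$ was absorbed by $E_1$, and for $\gamma\le 8/5$ nothing $\kappa$-free is available at that level), which destroys the $\kappa$-independence you need for $\gamma>4/3$;
\item re-run the energy identity inside the combined functional, which merely undoes the step.
\end{itemize}
The paper instead substitutes the $c$-equation $\rho=\pa_\tau c-\Delta c+c$ directly:
\[
  -\int_0^t\!\!\int_\Omega\rho\,\pa_\tau c\,dxd\tau
  = -\int_0^t\!\!\int_\Omega|\pa_\tau c|^2dxd\tau
  - \frac12\Big[\|\na c\|_{L^2(\Omega)}^2+\|c\|_{L^2(\Omega)}^2\Big]_0^t
  \le \frac12\|c^0\|_{H^1(\Omega)}^2 .
\]
Together with $-\int_0^t\int_\Omega\rho c\le 0$, this gives $K_1\le\frac{\nu}{2}\|c^0\|_{H^1(\Omega)}^2+\nu\int_0^t\int_\Omega\rho^2\,dxd\tau$ with no reference to $v$ or $\kappa$.

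Second, the large constant $M$ must multiply the energy inequality, not the BD functional. This is how the proof of Lemma~\ref{lem.bdent} is organized, despite the shorthand $E_0+E_1+ME_{BD}$ in the introduction. The BD computation yields $+\nu\int\int\rho|\mathbb{D}(v)|^2$ and the drag error $K_4\le\nu r_1\int\int(\frac32\rho|v|^4+\frac52\rho|\mathbb{D}(v)|^2)$. Both are absorbed by $M$ times the energy dissipation, with $M=M(\nu,r_1)$. With $E_0+E_1+ME_{BD}$, the BD contribution $+M\nu\int\int\rho|\mathbb{D}(v)|^2$ cancels against the viscous dissipation of the kinetic part of $ME_{BD}$. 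Only $\nu\int\int\rho|\mathbb{D}(v)|^2$ (from $E_0$) then remains to absorb $\frac52M\nu r_1\int\int\rho|\mathbb{D}(v)|^2$, which fails for large $M$. The antisymmetric dissipation does not help, since $v\cdot\na v\cdot v$ only sees $\mathbb{D}(v)$.

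Finally, a detail you left unspecified: the $\eps$-error in your Step~2 is $\eps\int\int\rho^2$. It is controlled through the last case of Lemma~\ref{lem.aux} by the dissipation $C_K\eps\kappa\int\int|\na\sqrt[4]{\rho}|^4$. That dissipation arises from $\eps\Delta\rho$ tested against the Korteweg part of the test function.
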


If $\gamma > 4/3$, we can pass to the limit $(\kappa,r_0,r_1)\to 0$ and obtain a weak solution to the chemotaxis compressible Navier--Stokes system.

\begin{theorem}[Existence for the chemotaxis Navier--Stokes system]\label{thm.nse}
Let $T>0$, $\gamma>4/3$, and let \eqref{1.init} hold. Then there exists a weak solution to \eqref{1.rho}--\eqref{1.ic} on $[0,T]$. 
\end{theorem}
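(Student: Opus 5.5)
The plan is to follow the Lacroix-Violet--Vasseur strategy \cite{LaVa18}, starting from the solutions of Theorem \ref{thm.nske}. Since $\gamma>4/3$, Theorem \ref{thm.nske} provides, for each $(\kappa,r_0,r_1)>0$, a weak solution $(\sqrt{\rho_\kappa},\sqrt{\rho_\kappa}v_\kappa,c_\kappa)$ with bounds \eqref{1.regul} independent of $(\kappa,r_0,r_1)$. It is also a renormalized solution in the sense of Definition \ref{def.renorm}, with measures $R_\varphi$, $Q_\varphi^{ijk}$ bounded by $C_2\|\varphi''\|_{L^\infty}$, uniformly in the parameters. We take $r_1=r_1(\kappa)$ and $r_0=r_0(\kappa)$ tending to zero together with $\kappa$. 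We then pass to the limit in the renormalized formulation rather than in the weak one, because without the Mellet--Vasseur estimate we cannot directly control $\rho v\otimes v$ in $L^1$ uniformly. Finally, we recover the weak formulation \eqref{1.wv} by letting $\varphi$ approximate the identity.

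\textbf{Compactness.} From $\na\sqrt{\rho_\kappa}\in L^\infty(0,T;L^2)$, $\rho_\kappa\in L^\infty(0,T;L^\gamma)$ and the continuity equation, we get $\pa_t\sqrt{\rho_\kappa}$ bounded in $L^2(0,T;H^{-1})$, using $\pa_t\sqrt\rho=-\frac12\sqrt\rho\diver v-v\cdot\na\sqrt\rho$ written via $\mathbb{T}_\nu$. Aubin--Lions then gives $\sqrt{\rho_\kappa}\to\sqrt\rho$ strongly in $L^2(0,T;L^p)$ for $p<6$. Moreover, $\rho^{\gamma/2}\in L^2(H^1)$ gives strong convergence of the pressure in $L^1$. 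For $c_\kappa$, the bounds in $L^\infty(H^1)\cap H^1(L^2)$ together with parabolic regularity (with $\rho\in L^2(\Omega_T)$ by \eqref{1.rho2}) give $c_\kappa\to c$ strongly in $L^2(H^1)$. Hence $\rho_\kappa\na c_\kappa\to\rho\na c$ in $L^1$, because $\rho_\kappa\to\rho$ strongly in $L^2$ thanks to \eqref{1.rho2} and the uniform $H^1$ bound on $\rho^{\gamma/2}$. Next, $\rho_\kappa v_\kappa=\sqrt{\rho_\kappa}\sqrt{\rho_\kappa}v_\kappa$ is bounded in $L^\infty(L^{3/2})$ with a time derivative bounded in a negative Sobolev space, so it converges strongly in $L^2(L^{3/2-})$ (or in $L^p$, $p<3/2$), as in \cite{MeVa07,LaVa18}. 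On $\{\rho>0\}$ we define $v=\rho v/\rho$ and set $v=0$ elsewhere. Then $\rho_\kappa\to\rho$ and $\rho_\kappa v_\kappa\to \rho v$ a.e., which yields $\sqrt{\rho_\kappa}\varphi(v_\kappa)\to\sqrt\rho\varphi(v)$ strongly in $L^p$ for bounded $\varphi$. Finally, $\mathbb{T}_{\nu,\kappa}\rightharpoonup\mathbb{T}_\nu$ weakly in $L^2$.

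\textbf{Vanishing terms.} The Korteweg term satisfies $\sqrt{\kappa\rho}\mathbb{S}_\kappa=\sqrt\kappa\cdot\sqrt\kappa\sqrt\rho(\na^2\sqrt\rho-4\na\sqrt[4]\rho\otimes\na\sqrt[4]\rho)$. Since $\sqrt\kappa\Delta\sqrt\rho\in L^2$ and $\na\sqrt[4]{\kappa\rho}\in L^4$ uniformly, this term is $O(\sqrt\kappa)$ in $L^1$ (and $O(\kappa^{1/2})$ for the second part); the Hessian is controlled by $\Delta$ on the torus. For the drag terms, $r_0 v\cdot\varphi'(v)$ and $r_1\rho|v|^2v\cdot\varphi'(v)$ are bounded by $r_0$ and by $r_1\rho|v|^2$, respectively, using \eqref{1.varphi}. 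The uniform bound on $\sqrt[4]{r_1\rho}v$ in $L^4$ gives $r_1\|\rho|v|^2\|_{L^1}\le r_1^{1/2}C$, so both terms vanish. The measures $R_\varphi$ and $Q_\varphi$ converge weakly-* along subsequences, with the same bound. We thus obtain a renormalized solution with $\kappa=r_0=r_1=0$. In \eqref{1.T} we also identify $\sqrt\rho\varphi'(v)\mathbb{T}_\nu$ as the weak limit, using the strong convergence of $\sqrt\rho\varphi'(v)$ times the weak convergence of $\mathbb{T}_\nu$.

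\textbf{Removing the renormalization.} We choose $\varphi_n(y)=\phi_n(y_1)$-type truncations, where $\phi(s)=s$ for $|s|\le n$, $\phi$ is bounded, and $\|\varphi_n''\|_\infty\le C/n$. The measures then satisfy $\|R_{\varphi_n}\|+\|Q_{\varphi_n}\|\le C/n\to0$. The remaining terms converge by dominated convergence: $\rho\varphi_n(v)\to\rho v$ and $\rho\varphi_n(v)v\to\rho v\otimes v$. The second of these needs equi-integrability of $\rho|v|^2$ in $L^1$, which we have from $\sqrt\rho v\in L^\infty(L^2)$ because $|\varphi_n(v)v|\le|v|^2$ pointwise. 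The remaining convergences are $\varphi_n'(v)\to I$ a.e. with $|\varphi_n'|$ bounded, and $\sqrt\rho\mathbb{S}_\nu\in L^2\cdot L^2$; these give \eqref{1.wv} with $\kappa=r_0=r_1=0$.

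The main obstacle is the first compactness step for $\rho_\kappa v_\kappa$ and the a.e. convergence of $v_\kappa$ on $\{\rho>0\}$, which are needed to pass to the limit in $\rho\varphi(v)v$ and $\sqrt\rho\varphi'(v)\mathbb{T}_\nu$. Here we would rely exactly on the argument of \cite{LaVa18}, since the chemotaxis term only adds the strongly converging contribution $\rho\na c$.
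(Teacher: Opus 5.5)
Your proposal follows essentially the same route as the paper: the Lacroix-Violet--Vasseur scheme of renormalizing in $v$ at $\kappa>0$ with $\|R_\varphi\|+\|Q_\varphi\|\le C\|\varphi''\|_{L^\infty}$ uniformly, passing to the limit $(\kappa,r_0,r_1)\to0$ in the renormalized formulation (where the chemotaxis term is harmless), and then letting $\varphi_n$ tend to the identity. Two details need tightening. First, the weak-to-renormalized property you assume is proved explicitly in the paper, in the first subsection of Section~\ref{sec.nse}, via mollification, the cutoff $\phi_m(\rho)$, and a.e.\ positivity of $\rho$ from the $r_0\log\rho$ bound. Second, a truncation depending only on $y_1$ violates \eqref{1.varphi}, since $|y_2\varphi_n(y)|$ is unbounded, so you need a cutoff in every variable, e.g.\ $\varphi_n(y)=n\Psi(y_1/n)\prod_{j=1}^3\Phi(y_j/n)$ as in the paper.
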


The paper is organized as follows. Inequalities \eqref{1.rhoc}--\eqref{1.rho4} are shown in Section \ref{sec.aux}. Theorems \ref{thm.nske} and \ref{thm.nse} are proved in Sections \ref{sec.nske} and \ref{sec.nse}, respectively.


\section{An auxiliary lemma}\label{sec.aux}

We show the following result which is needed to estimate the terms arising from the chemotaxis interaction.

\begin{lemma}\label{lem.aux}
For arbitrary $\sigma_1$, $\sigma_2>0$, there exists $C>0$, depending on $\sigma_i$ and the $L^1(\Omega)$ norms of $\rho$ and $c$, such that for all smooth functions $\rho$, $c$, defined on $\Omega$, 
\begin{align}\label{2.rhoc}
  \int_\Omega\rho c dx \le 
  \begin{dcases}
  C + \sigma_1\int_\Omega\rho^\gamma dx 
  + \sigma_2\int_\Omega|\na c|^2 dx 
  &\mbox{if }\gamma>8/5, \\
  C + \sigma_1\int_\Omega|\na\sqrt{\rho}|^2 dx
  + \sigma_2\int_\Omega|\na c|^2 dx.
  \end{dcases}
\end{align}
Furthermore, for any $\sigma>0$, there exists $C>0$, depending on $\sigma$ and the $L^1(\Omega)$ norm of $\rho$, such that for all smooth functions $\rho$, defined on $(0,t)\times\Omega$,
\begin{align}\label{2.rho2}
  \int_0^t\int_\Omega\rho^2 dxd\tau \le 
  \begin{dcases}
  Ct + \int_0^t\int_\Omega\rho^\gamma dxd\tau
  &\mbox{if }\gamma\ge 2, \\
  Ct + \sigma\int_0^t\int_\Omega|\na\rho^{\gamma/2}|^2 dxd\tau
  &\mbox{if }4/3<\gamma<2, \\
  Ct + \sigma\int_0^t\int_\Omega|\na\sqrt[4]{\rho}|^4 dxd\tau. 
  \end{dcases}
\end{align}
\end{lemma}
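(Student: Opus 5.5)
The plan is to use H\"older's inequality, the Gagliardo--Nirenberg inequality on the three-dimensional torus (in its form with an added lower-order $L^1$ norm to account for constants), and Young's inequality with a small parameter. Throughout, $\|\rho\|_{L^1}$ and $\|c\|_{L^1}$ are treated as fixed constants.

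\textbf{Estimate \eqref{2.rhoc}.} We start from
\[
\int_\Omega\rho c\,dx\le\|\rho\|_{L^{p}}\|c\|_{L^{p'}}.
\]
For $c$ we apply Gagliardo--Nirenberg between $L^1$ and $\dot H^1$:
\[
\|c\|_{L^{q}}\le C\|\na c\|_{L^2}^{\theta}\|c\|_{L^1}^{1-\theta}+C\|c\|_{L^1},
\qquad \frac1q=\theta\Big(\frac12-\frac13\Big)+(1-\theta),
\]
so $\theta=\frac65(1-1/q)$ and $q\le 6$.

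In the first case we take $p=\gamma$. Young's inequality lets us absorb $\|\rho\|_{L^\gamma}\|\na c\|_{L^2}^\theta$ into $\sigma_1\|\rho\|_{L^\gamma}^\gamma+\sigma_2\|\na c\|_{L^2}^2$ provided $1/\gamma+\theta/2<1$. With $1/q=1-1/\gamma$ this gives $\theta=\frac{6}{5\gamma}$, and the condition becomes $1/\gamma+3/(5\gamma)<1$, i.e.\ $\gamma>8/5$, which is exactly the stated threshold.

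In the second case we use $\sqrt\rho$ instead. By Gagliardo--Nirenberg, $\|\sqrt\rho\|_{L^{2s}}\le C\|\na\sqrt\rho\|_{L^2}^{\alpha}\|\sqrt\rho\|_{L^2}^{1-\alpha}+C\|\sqrt\rho\|_{L^2}$, where $\|\sqrt\rho\|_{L^2}^2=\|\rho\|_{L^1}$. Take $p=s=p'=2$, so that $q=2$ and $\theta=3/5$. For $\|\rho\|_{L^2}=\|\sqrt\rho\|_{L^4}^2$ we need $\frac14=\alpha(\frac12-\frac13)+\frac{1-\alpha}{2}$, giving $\alpha=3/4$, hence $\|\rho\|_{L^2}\lesssim\|\na\sqrt\rho\|_{L^2}^{3/2}+1$. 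The product of powers is $\|\na\sqrt\rho\|^{3/2}\|\na c\|^{3/5}$, and since $3/4+3/10<1$ Young's inequality again yields $\sigma_1\|\na\sqrt\rho\|^2+\sigma_2\|\na c\|^2+C$.

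\textbf{Estimate \eqref{2.rho2}, pointwise in time.} We prove each bound for fixed $\tau$ and then integrate in time.
\begin{itemize}
\item If $\gamma\ge2$, the bound follows from $\rho^2\le 1+\rho^\gamma$ pointwise.
\item If $4/3<\gamma<2$, set $w=\rho^{\gamma/2}$, so $\|w\|_{L^{2/\gamma}}^{2/\gamma}=\|\rho\|_{L^1}$ and $\int\rho^2=\|w\|_{L^{4/\gamma}}^{4/\gamma}$. Gagliardo--Nirenberg between $L^{2/\gamma}$ and $\dot H^1$ gives $\|w\|_{L^{4/\gamma}}\lesssim\|\na w\|_{L^2}^{\beta}+1$ with $\frac\gamma4=\beta\frac16+(1-\beta)\frac\gamma2$, i.e.\ $\beta=\frac{3\gamma}{6\gamma-2}$. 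Subcriticality requires $\beta\cdot 4/\gamma<2$, i.e.\ $12<12\gamma-4$, which is exactly $\gamma>4/3$. Young's inequality then gives $\sigma\|\na w\|_{L^2}^2+C$.
\item For the last bound, set $u=\rho^{1/4}$, so $\int u^4=\|\rho\|_{L^1}$ and $\int\rho^2=\|u\|_{L^8}^8$. Gagliardo--Nirenberg between $L^4$ and $\dot W^{1,4}$ in three dimensions gives $\frac18=\beta(\frac14-\frac13)+\frac{1-\beta}{4}$, so $\beta=3/4$ and $\|u\|_{L^8}^8\lesssim\|\na u\|_{L^4}^{6}+1$.
\end{itemize}

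\textbf{Main obstacle.} The last case is where the plan runs into trouble: the power $6$ exceeds $4$, so the naive pointwise-in-time argument fails. I would first check whether $\|u\|_{L^\infty}$ could be interpolated instead, but more likely the time integral must be handled by also using the $L^\infty_t$ control of $\na\sqrt\rho$ (the BD bound), or by an interpolation producing exponent exactly $4$ with an $L^\infty_t L^2$ factor treated as a constant. Concretely, I would interpolate $\rho^2$ between $\rho^{1/4}\in L^\infty_tL^4$ and $\na\rho^{1/4}\in L^4$, allowing $C$ to depend on $\sup_t\|\na\sqrt\rho\|_{L^2}$ if necessary. This is the step I expect to be hardest; the exponent bookkeeping in the other cases is routine.
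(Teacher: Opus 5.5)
Your proposal has two genuine gaps, both caused by arithmetic slips. The first is in the last case of \eqref{2.rho2}, where the ``main obstacle'' you describe does not exist. The right-hand side of $\frac18=\beta(\frac14-\frac13)+\frac{1-\beta}{4}$ equals $\frac14-\frac{\beta}{3}$, so $\beta=\frac38$, not $\frac34$. Gagliardo--Nirenberg therefore gives $\|\rho^{1/4}\|_{L^8}^8\le C\|\nabla\rho^{1/4}\|_{L^4}^{3}\|\rho\|_{L^1}^{5/4}+C\|\rho\|_{L^1}^{2}$. Since $3<4$, Young's inequality yields $C(\sigma)+\sigma\|\nabla\rho^{1/4}\|_{L^4}^4$ at each fixed time, and integrating in time finishes the case. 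This is exactly the paper's argument ($\theta=3/8$, $8\theta=3<4$). Scaling confirms the exponent: for $\rho_\lambda=\lambda^3\rho(\lambda\,\cdot)$ the mass is fixed, while $\int\rho_\lambda^2\sim\lambda^3$ and $\|\nabla\rho_\lambda^{1/4}\|_{L^4}\sim\lambda$. As written, however, you leave this case open. The workaround you suggest (letting $C$ depend on $\sup_t\|\nabla\sqrt\rho\|_{L^2}$) would also prove a weaker statement than the lemma, whose constant may depend only on $\sigma$ and $\|\rho\|_{L^1}$.

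The second gap is in the general case of \eqref{2.rhoc}. With $p=p'=2$ you need $\frac34+\frac{3}{10}<1$, but this sum is $\frac{21}{20}$. So the numerical inequality $X^{3/2}Y^{3/5}\le\sigma_1X^2+\sigma_2Y^2+C$ fails for $X=Y\to\infty$, and the Young step does not close. Running your own bookkeeping for general $p$ gives $\|\rho\|_{L^p}\le C(1+\|\nabla\sqrt\rho\|_{L^2}^{3(1-1/p)})$ and $\|c\|_{L^{p'}}\le C(1+\|\nabla c\|_{L^2}^{6/(5p)})$. Young's inequality then closes exactly when $\frac32(1-\frac1p)+\frac{3}{5p}<1$, i.e.\ $p<9/5$; together with $p'\le 6$ this requires $p\in[6/5,9/5)$. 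Choosing $p=5/3$ (exponent sum $\frac35+\frac{9}{25}=\frac{24}{25}$) repairs the argument. This is precisely the paper's route: Sugiyama's inequality with $m=5/3$, followed by Gagliardo--Nirenberg for $\sqrt\rho$ with $\theta=3/5$.

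The rest of your proposal is sound. The case $\gamma>8/5$ of \eqref{2.rhoc} (H\"older with $p=\gamma$ plus Gagliardo--Nirenberg for $c$ between $L^1$ and $\dot H^1$) is a correct, self-contained derivation of the Sugiyama-type bound that the paper quotes as a black box. Your cases $\gamma\ge2$ and $4/3<\gamma<2$ of \eqref{2.rho2} coincide with the paper's.
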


\begin{proof}
The proof of \eqref{2.rhoc} relies on the inequality
\begin{align*}
  \int_\Omega\rho cdx \le \sigma_1\|\rho\|_{L^m(\Omega)}^m
  + \sigma_2\|\na c\|_{L^2(\Omega)}^2 + C_1(\sigma_1,\sigma_2)
  \|c\|_{L^1(\Omega)}^{C_2(m)}
\end{align*}
for functions $\rho\in L^m(\Omega)$, $c\in H^1(\Omega)$, where $\sigma_1$, $\sigma_2>0$ are arbitrary, $C_1(\sigma_1,\sigma_2)$, $C_2(m)>0$, and $m>8/5$ \cite[Appendix B]{Sug07}. We need to control the first term on the right-hand side. If $\gamma>8/5$, we choose $m=\gamma$. In the general case, we choose $m=5/3$ and apply the Gagliardo--Nirenberg inequality with $\theta=3/5$:
\begin{align*}
  \|\rho\|_{L^m(\Omega)}^m &= \|\sqrt\rho\|_{L^{2m}(\Omega)}^{2m}
  \le C\|\na\sqrt\rho\|_{L^2(\Omega)}^{2m\theta}
  \|\sqrt{\rho}\|_{L^2(\Omega)}^{2m(1-\theta)}
  + C\|\sqrt\rho\|_{L^2(\Omega)}^{2m}  \\
  &\le C + C\|\na\sqrt\rho\|_{L^2(\Omega)}^{2},
\end{align*}
where we used $m\theta =1$, and $C>0$ depends on the $L^1(\Omega)$ norm of $\rho$. This shows \eqref{2.rhoc}.

We turn to the proof of \eqref{2.rho2}. The inequality is clear if $\gamma=2$. If $\gamma>2$, there exists $C>0$ such that $\rho^2\le C + \rho^\gamma$, which proves the second case. Let $4/3<\gamma<2$. Then, by the Gagliardo--Nirenberg inequality with $\theta= 3\gamma/(6\gamma-2)<1$ (since $\gamma>1$),
\begin{align*}
  \int_\Omega\rho^2 dx 
  &= \|\rho^{\gamma/2}\|_{L^{4/\gamma}(\Omega)}^{4/\gamma}
  \le C\|\na\rho^{\gamma/2}\|_{L^{2}(\Omega)}^{4\theta/\gamma}
  \|\rho^{\gamma/2}\|_{L^{2/\gamma}(\Omega)}^{4(1-\theta)/\gamma}
  + C \|\rho^{\gamma/2}\|_{L^{2/\gamma}(\Omega)}^{4/\gamma} \\
  &\le C+C\|\na\rho^{\gamma/2}\|_{L^{2}(\Omega)}^{4\theta/\gamma}
  \le C(\sigma) + \sigma\|\na\rho^{\gamma/2}\|_{L^{2}(\Omega)}^2,
\end{align*} 
where the last step follows from Young's inequality using $4\theta/\gamma<2$ (at this point, we need $\gamma>4/3$). Finally, for any $\gamma>1$, applying the Gagliardo--Nirenberg inequality again (with $\theta=3/8$), 
\begin{align*}
  \int_\Omega\rho^2 dx &= \|\sqrt[4]{\rho}\|_{L^8(\Omega)}^8
  \le C\|\na\sqrt[4]{\rho}\|_{L^{4}(\Omega)}^{8\theta}
  \|\sqrt[4]{\rho}\|_{L^4(\Omega)}^{8(1-\theta)}
  + C\|\sqrt[4]{\rho}\|_{L^4(\Omega)}^8 \\
  &\le C(\sigma) + \sigma\|\na\sqrt[4]{\rho}\|_{L^{4}(\Omega)}^4
\end{align*}
for an arbitrary $\sigma>0$, which is possible since $8\theta=3<4$.
\end{proof}


\section{Proof of Theorem \ref{thm.nske}}\label{sec.nske}

We formulate an approximate system by adding, similarly as in \cite[Sec.~2]{VaYu16}, higher-order terms in the density and velocity. This system is approximated by a Faedo--Galerkin scheme.  Estimates uniform in the Faedo--Galerkin dimension follow from an approximate energy inequality. To pass to the limit in the higher-order terms, we formulate a BD entropy inequality, which yields additional estimates uniform in the regularization parameters. The de-regularization limit is then performed by following the compactness arguments of \cite{VaYu16}. 

\subsection{Solution of an approximate system}\label{sec.approx}

We prove the existence of a weak solution to the chemotaxis compressible Navier--Stokes--Korteweg equations \eqref{1a.rhoc}--\eqref{1a.v}. We solve first the approximate system
\begin{align}
  & \pa_t\rho + \diver(\rho v) = \eps\Delta\rho, 
  \quad \pa_t c = \Delta c - c + \rho\quad\mbox{in }\Omega,\ t>0, 
  \label{3.rhoc} \\
  & \pa_t(\rho v) + \diver(\rho v\otimes v) + \na p(\rho)
  = \nu\diver(\rho\mathbb{D}(v)) + \rho\na c - \frac{\rho v}{\zeta}
   + Q_K + Q_{ho} \label{3.v}
\end{align}
where the Korteweg/drag regularization $Q_K=Q_K(\kappa,r_0,r_1)$ and the higher-order regularization $Q_{ho}=Q_{ho}(\delta,\eps,\eta,\mu)$ are defined by, respectively,
\begin{align*}
  Q_K(\kappa,r_0,r_1) &= \kappa\rho\na\bigg(
  \frac{\Delta\sqrt\rho}{\sqrt\rho}\bigg) - r_0v
  - r_1\rho|v|^2 v, \\
  Q_{ho}(\delta,\eps,\eta,\mu) &= \eps\diver(v\otimes\na\rho)
  - \mu\Delta^2v + \eta\na\rho^{-3} + \delta\rho\na\Delta^5\rho,
\end{align*}
and we impose smooth initial conditions \eqref{1.ic} such that $\rho^0$ is strictly positive. The regularization parameters $(\eps,r_0,r_1,\mu,\eta,\delta)$ are positive; we suppose additionally that $\eps\le 1$.

The higher-order regularization is similar to \cite[(2.6)]{VaYu16}. The diffusion term $\eps\Delta\rho$ in the mass equation implies strict lower and upper bounds for the density via the maximum principle, while the term $\eps\diver(v\otimes\na\rho)$ is added to preserve the energy structure. The higher-order term $\mu\Delta^2 v$ yields more regularity for the velocity, which is needed to derive the BD entropy inequality. The remaining expressions $\eta\na\rho^{-3}$ and $\delta\rho\na\Delta^5\rho$ yield the strict positivity of the density by means of the inequality 
\begin{align}\label{3.pos}
  \|\rho^{-1}\|_{L^\infty(\Omega)}
  \le C\big(1+\|\rho^{-1}\|_{L^3(\Omega)}\big)^3
  \big(1+\|\rho\|_{H^k(\Omega)}\big)^2,
\end{align}
which holds for all $\rho\in H^k(\Omega)$ with $k>7/2$ such that $\rho^{-1}\in L^3(\Omega)$ \cite[(13)]{BrDe06}. This allows us to use later the test function $\na\log\rho$ to derive the BD entropy inequality.

System \eqref{3.rhoc}--\eqref{3.v} is solved by a Faedo--Galerkin approximation as in \cite[Chap.~7]{Fei04} or \cite[Sec.~7.7]{NoSt04}. The idea is to choose a basis $(e_k)$ of eigenfunctions of $-\Delta$ with domain $\Omega$ and the finite-dimensional space $X_n=\operatorname{span}\{e_1,\ldots,e_n\}^3\subset L^2(\Omega;\R^3)$. A fixed-point argument shows the existence of a time $T_n>0$ and a solution $(\rho_n,c_n,v_n)\in C^1([0,T_n];C^\infty(\Omega))^2\times C^1([0,T_n];X_n)$ to \eqref{3.rhoc} and
\begin{align}\label{3.vn}
  \int_\Omega\big(&\rho_n v_n\cdot\theta(T_n)
  - \rho^0v^0\cdot\theta(0)\big)dx 
  = \int_0^{T_n}\int_\Omega\bigg(\rho_nv_n\cdot\pa_\tau\theta
  + \rho_n(v_n\otimes v_n):\na\theta \\
  &+ p(\rho_n)\diver\theta 
  - \nu v_n\cdot\diver(\rho_n\mathbb{D}(\theta))
  + \rho_n\na c_n\cdot\theta - \frac{\rho_n v_n}{\zeta}\cdot\theta
  \bigg)dx d\tau \nonumber \\
  &+ \int_0^{T_n}\int_\Omega\bigg(\kappa\rho_n\na\bigg(
  \frac{\Delta\sqrt{\rho_n}}{\sqrt{\rho_n}}\bigg)\cdot\theta
  - r_0v_n\cdot\theta - r_1\rho_n|v_n|^2v_n\cdot\theta\bigg)dxd\tau 
  \nonumber \\
  &- \int_0^{T_n}\int_\Omega\big(\eps(v_n\otimes\na\rho_n):\na\theta
  + \mu v_n\cdot\Delta^2\theta + \eta\rho_n^{-3}\diver\theta
  - \delta\rho_n\na\Delta^5\rho_n\cdot\theta\big)dxd\tau \nonumber 
\end{align}
for all $\theta\in C^1([0,T_n];X_n)$. The initial conditions are $\rho_n(0,\cdot)=\rho^0$, $v_n(0,\cdot)=P_nv^0$, $c_n(0,\cdot)=c^0$ in $\Omega$, where $P_n$ the projection on $X_n$. Let the initial data satisfy $0<\rho_-^0\le\rho_n^0(x)\le\rho_+^0$, $0<c_-^0\le c_n^0(x)\le c_+^0$ for a.e.\ $x\in\Omega$. The maximum principle implies the bounds
\begin{align*}
  \rho_-(t)\le\rho_n(t,x)\le\rho_+(t), \quad
  \min\{c_-^0,\rho_-^0\}\le c_n(t,x)\le \max\{c_+^0,\rho_+^0\}
\end{align*}
for $(t,x)\in\Omega_{T_n}$, where
\begin{align*}
  \rho_\pm(t) :=\rho_\pm^0\exp\bigg(\pm\int_0^{T_n}
  \|\diver v_n\|_{L^\infty(\Omega)}d\tau\bigg) > 0.
\end{align*}

To prove that the local solution $(\rho_n,c_n,v_n)$ exists on the whole time interval $[0,T]$, we need a uniform estimate for $v_n(t)$ in $X_n$ on $[0,T_n]$. This is achieved by the following approximate energy inequality. Recall that $h(\rho) = \rho^\gamma/(\gamma-1)$ and let the modified energy be given by 
\begin{align}\label{3.EK}
  E_{K}(\rho,v,c) = \int_\Omega\bigg(\frac{1}{2}\rho|v|^2
  + h(\rho) + \kappa|\na\sqrt\rho|^2 + \frac12(|\na c|^2+c^2)\bigg)dx,
\end{align}
and introduce the higher-order energy
\begin{align}\label{3.Eho}
  E_{ho}(\rho) = \int_\Omega\bigg(\frac{\eta}{4}\rho^{-3}
  + \frac{\delta}{2}|\na\Delta^2\rho|^2\bigg)dx.
\end{align}
Notice that we have not added the chemotaxis part $-\rho c$ as in \eqref{1.E0} such that this expression will appear on the right-hand side of the energy inequality.

\begin{lemma}[Approximate energy inequality]
Let $(\rho_n,c_n,v_n)$ be the local solution to \eqref{3.rhoc} and \eqref{3.vn}. Then, for $0<t\le T_n$,
\begin{align}\label{3.aei}
  E_K&((\rho_n,v_n,c_n)(t)) + E_{ho}(\rho(t))
  + \int_0^t\big(D_K(\rho_n,v_n,c_n) + D_{ho}(\rho_n,v_n,c_n)
  \big)d\tau \\
  &\le (1+\eps)E_K(\rho^0,v^0,c^0) + E_{ho}(\rho^0)
  + \int_\Omega(\rho_nc_n)(t)dx 
  + \eps\int_0^t\int_\Omega\rho_n^2 dxd\tau. \nonumber 
\end{align}
where the dissipation terms are given by
\begin{align}
  D_K(\rho_nv_n,c_n) &= \int_\Omega\bigg(\frac{\rho_n|v_n|^2}{\zeta}
  + r_0|v_n|^2 + r_1\rho_n|v_n|^4 + \nu\rho_n|\mathbb{D}v_n|^2
  + |\pa_\tau c_n|^2\bigg)dx, \nonumber \\ 
  D_{ho}(\rho_n,v_n,c_n) &= \int_\Omega\bigg(\frac{4\eps}{\gamma}
  |\na\rho_n^{\gamma/2}|^2 + C_K\eps\kappa  
  \big(|\na\sqrt[4]{\rho_n}|^4 + |\Delta\sqrt{\rho_n}|^2\big) 
  \nonumber \\
  &\phantom{xx}+ \mu|\Delta v_n|^2 
  + \frac{4\eps \eta}{3}|\nabla\rho^{-3/2}|^2
  + \eps\delta|\Delta^3\rho_n|^2\bigg)dx, \nonumber 
\end{align}
and the constant $C_K=1/16$ comes from inequality \eqref{1.qineq}.
\end{lemma}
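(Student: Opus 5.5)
The plan is to derive the energy identity for the Galerkin solution by testing the momentum equation \eqref{3.vn} with $\theta=v_n$, which is admissible since $v_n\in C^1([0,T_n];X_n)$. We then combine it with the mass equation in \eqref{3.rhoc} tested by $h'(\rho_n)-\frac12|v_n|^2$, by $-2\kappa\Delta\sqrt{\rho_n}/\sqrt{\rho_n}$ (for the Korteweg part), and by the derivatives of $E_{ho}$. All manipulations are legitimate because $\rho_n$ is smooth and bounded away from zero on $[0,T_n]$.

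\textbf{Kinetic, pressure and viscous terms.} The convective and time-derivative terms combine, via $\pa_t\rho_n+\diver(\rho_nv_n)=\eps\Delta\rho_n$, into $\frac{d}{dt}\int\frac12\rho_n|v_n|^2$ plus an $\eps$-remainder $-\frac{\eps}{2}\int\Delta\rho_n|v_n|^2$. This remainder is cancelled exactly by the term $\eps\diver(v_n\otimes\na\rho_n)$ tested with $v_n$; that term was built in for this purpose. The pressure term gives $\frac{d}{dt}\int h(\rho_n)$ plus the dissipation $\eps\int h''(\rho_n)|\na\rho_n|^2=\frac{4\eps}{\gamma}\int|\na\rho_n^{\gamma/2}|^2$. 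The viscous term yields $\nu\int\rho_n|\mathbb{D}v_n|^2$, and the drag and friction terms yield $\rho_n|v_n|^2/\zeta$, $r_0|v_n|^2$ and $r_1\rho_n|v_n|^4$. The term $\mu\Delta^2$ gives $\mu|\Delta v_n|^2$.

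\textbf{Higher-order and Korteweg terms.} The term $\eta\na\rho^{-3}$ tested with $v_n$ equals $-\eta\int\rho_n^{-3}\diver v_n$. Using the mass equation, this becomes $\frac{d}{dt}\int\frac\eta4\rho_n^{-3}$ plus $\eps\eta\int 3\rho_n^{-5}|\na\rho_n|^2=\frac{4\eps\eta}{3}\int|\na\rho_n^{-3/2}|^2$. Likewise, $\delta\rho\na\Delta^5\rho$ gives $\frac{d}{dt}\frac\delta2\int|\na\Delta^2\rho_n|^2+\eps\delta\int|\Delta^3\rho_n|^2$. The Korteweg term $\kappa\rho\na(\Delta\sqrt\rho/\sqrt\rho)\cdot v$ equals $-\kappa\int\diver(\rho v)\Delta\sqrt\rho/\sqrt\rho$. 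Inserting $\diver(\rho_n v_n)=\eps\Delta\rho_n-\pa_t\rho_n$ produces $\frac{d}{dt}\kappa\int|\na\sqrt{\rho_n}|^2$ and the $\eps$-term $\eps\kappa\int\Delta\rho_n\,\Delta\sqrt{\rho_n}/\sqrt{\rho_n}$. The identity $\int\Delta\rho\,\Delta\sqrt\rho/\sqrt\rho=\frac12\int\rho|\na^2\log\rho|^2$ holds on the torus (see \cite{Jue10}). Combined with \eqref{1.qineq}, it bounds the $\eps$-term from below by $C_K\eps\kappa\int(|\na\sqrt[4]{\rho_n}|^4+|\Delta\sqrt{\rho_n}|^2)$. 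Getting the constants and the sign right here is the most delicate algebraic step.

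\textbf{Chemotaxis coupling.} The force gives $\int\rho_n\na c_n\cdot v_n=-\int\diver(\rho_nv_n)c_n=\int(\pa_t\rho_n-\eps\Delta\rho_n)c_n$. We write $\int\pa_t\rho_n c_n=\frac{d}{dt}\int\rho_nc_n-\int\rho_n\pa_tc_n$. Testing the $c$-equation with $\pa_tc_n$ gives $\int\rho_n\pa_tc_n=\int|\pa_tc_n|^2+\frac{d}{dt}\frac12\int(|\na c_n|^2+c_n^2)$. After integrating in time, this produces $\int(\rho_nc_n)(t)-\int\rho^0c^0$ on the right-hand side, together with the dissipation $|\pa_\tau c_n|^2$ and the $c$-energy on the left. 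The initial term $-\int\rho^0c^0\le0$ is dropped, since everything is positive.

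\textbf{The main obstacle.} The remaining term is $-\eps\int\Delta\rho_n c_n=\eps\int\na\rho_n\cdot\na c_n$. We plan to handle it by writing $-\eps\int\rho_n\Delta c_n$ and using the equation $\Delta c_n=\pa_tc_n+c_n-\rho_n$. This gives $\eps\int\rho_n^2-\eps\int\rho_n\pa_tc_n-\eps\int\rho_nc_n$. The last piece is nonpositive and is dropped. By the same identity as above, $-\eps\int\rho_n\pa_tc_n=-\eps\int|\pa_tc_n|^2-\eps\frac{d}{dt}\frac12\int(|\na c_n|^2+c_n^2)$. After time integration, this yields the factor $(1+\eps)$ on the initial chemical energy (bounded by $(1+\eps)E_K(\rho^0,v^0,c^0)$), a nonpositive terminal term, and leaves exactly $\eps\int_0^t\int\rho_n^2$. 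Collecting all terms and integrating over $(0,t)$ gives \eqref{3.aei}.
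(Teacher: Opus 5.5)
Your proposal is correct and follows the paper's proof essentially step by step. You test \eqref{3.vn} with $v_n$ and combine it with the mass equation, handle the $\eps$-Korteweg term via $\int_\Omega\Delta\rho_n\,\Delta\sqrt{\rho_n}/\sqrt{\rho_n}\,dx=\frac12\int_\Omega\rho_n|\na^2\log\rho_n|^2dx$ together with \eqref{1.qineq}, and treat the chemotaxis term through the same manipulations as \eqref{3.aux}--\eqref{3.cdelta}. One small slip: the Korteweg part of the mass-equation test function should be $-\kappa\Delta\sqrt{\rho_n}/\sqrt{\rho_n}$, not $-2\kappa\Delta\sqrt{\rho_n}/\sqrt{\rho_n}$, as your own later computation producing $\frac{d}{dt}\kappa\int_\Omega|\na\sqrt{\rho_n}|^2dx$ confirms.
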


\begin{proof}
The proof is similar to that one of \cite[Lemma 3]{HuJu24}. However, this proof only holds for $\gamma>8/5$. Therefore, we need to estimate partially in a different way. We use the test function $v_n$ in the momentum equation \eqref{3.vn}, the test function $\frac12|v_n|^2 + h'(\rho_n) - \kappa\Delta\sqrt{\rho_n}/\sqrt{\rho_n} - \delta\Delta^5\rho_n - \frac34\eta\rho_n^{-4}$ in the weak formulation of the mass equation in \eqref{3.rhoc}, and sum both expressions. A computation yields
\begin{align}\label{3.aux0}
  \int_\Omega&\bigg(\frac12\rho_n|v_n|^2 + h(\rho_n)
  + \kappa|\na\sqrt{\rho_n}|^2 + \frac{\eta}{4}\rho_n^{-3}
  + \frac{\delta}{2}|\na\Delta^2\rho_n|^2\bigg)dx
  \bigg|_{\tau=0}^{\tau=t} \\
  &\phantom{xx}+ \int_0^t\int_\Omega\bigg(\frac{\rho_nv_n}{\zeta}
  + r_0|v_n|^2 + r_1\rho_n|v_n|^4 + \nu\rho_n|\mathbb{D}v_n|^2
  \bigg)dxd\tau \nonumber \\
  &\phantom{xx}+ \int_0^t\int_\Omega\bigg(
  \frac{4\eps}{\gamma}|\na\rho_n^{\gamma/2}|^2
  + \mu|\Delta v_n|^2 + \frac{4\eps \eta}{3}|\nabla\rho^{-3/2}|^2
  + \eps\delta|\Delta^3\rho_n|^2\bigg)dxd\tau \nonumber \\
  &= \eps\kappa\int_0^t\int_\Omega\na\rho_n\cdot\na
  \bigg(\frac{\Delta\sqrt{\rho_n}}{\sqrt\rho_n}\bigg)dxd\tau
  + \int_0^t\int_\Omega\rho_n\na c_n\cdot v_n dxd\tau. \nonumber 
\end{align}
It remains to control the terms on the right-hand side. The identity $2\rho_n\na(\Delta\sqrt{\rho_n}/\sqrt{\rho_n}) = \diver(\rho_n\na^2\log\rho_n)$ and inequality \eqref{1.qineq} show that 
\begin{align}\label{3.qineq}
  \eps&\kappa\int_0^t\int_\Omega\na\rho_n\cdot\na
  \bigg(\frac{\Delta\sqrt{\rho_n}}{\sqrt{\rho_n}}\bigg)dxd\tau
  = \eps\kappa\int_0^t\int_\Omega\rho_n\na\bigg(
  \frac{\Delta\sqrt{\rho_n}}{\sqrt{\rho_n}}\bigg)
  \cdot\na\log\rho_n dxd\tau \\
  &= -\frac{\eps\kappa}{2}\int_0^t\int_\Omega
  \rho_n|\na^2\log\rho_n|^2 dxd\tau
  \le -C_K\eps\kappa\int_0^t\int_\Omega 
  \big(|\Delta\sqrt{\rho_n}|^2 + |\na\sqrt[4]{\rho_n}|^4\big)dxd\tau.
  \nonumber 
\end{align}
The chemotaxis force term is reformulated as in the proof of Lemma 3 of \cite{HuJu24}, inserting the mass equation in \eqref{3.rhoc},
\begin{align}\label{3.aux}
  \int_0^t\int_\Omega\rho_n\na c_n\cdot v_n dxd\tau
  &= -\int_0^t\int_\Omega c_n\diver(\rho_nv_n)dxd\tau
  = \int_0^t\int_\Omega c_n(\pa_\tau\rho_n - \eps\Delta\rho_n)dxd\tau \\
  &= \int_\Omega\rho_nc_n dx\bigg|_{\tau=0}^{\tau=t}
  - \int_0^t\int_\Omega\rho_n\pa_\tau c_n dxd\tau 
  - \eps\int_0^t\int_\Omega c_n\Delta \rho_n dxd\tau. \nonumber 
\end{align}
The second term on the right-hand side is rewritten by inserting the equation for $c_n$ in \eqref{3.rhoc}:
\begin{align}\label{3.aux2}
  -\int_0^t\int_\Omega\rho_n\pa_\tau c_n dxd\tau
  &= -\int_0^t\int_\Omega(\pa_\tau c_n - \Delta c_n + c_n)\pa_\tau c_n
  dxd\tau \\
  &= -\int_0^t\int_\Omega(\pa_\tau c_n)^2 dxd\tau
  - \frac12\int_\Omega(|\na c_n|^2 + c_n^2)dxd\tau
  \bigg|_{\tau=0}^{\tau=t} \nonumber 
\end{align}
Using this expression, the last term on the right-hand side of \eqref{3.aux} becomes
\begin{align*}
  -\eps&\int_0^t\int_\Omega c_n\Delta \rho_n dxd\tau
  = -\eps\int_0^t\int_\Omega\rho_n\Delta c_n dxd\tau
  = -\eps\int_0^t\int_\Omega\rho_n(\pa_\tau c_n + c_n-\rho_n)dxd\tau \\
  &= -\eps\int_0^t\int_\Omega(\pa_\tau c_n)^2 dxd\tau
  - \frac{\eps}{2}\int_\Omega(|\na c_n|^2 + c_n^2)dxd\tau
  \bigg|_{\tau=0}^{\tau=t}
  - \eps\int_0^t\int_\Omega \rho_n c_n dxd\tau \\
  &\phantom{xx}+ \eps\int_0^t\int_\Omega \rho_n^2 dxd\tau.
\end{align*}
We neglect the nonpositive terms to arrive at
\begin{align}\label{3.cdelta}
  -\eps\int_0^t\int_\Omega c_n\Delta \rho_n dxd\tau
  \le \frac{\eps}{2}\int_\Omega(|\na c^0|^2+(c^0)^2)dx
  + \eps\int_0^t\int_\Omega \rho_n^2 dxd\tau.
\end{align}
Then, together with \eqref{3.aux2}, we infer from \eqref{3.aux} that
\begin{align*}
  \int_0^t&\int_\Omega\rho_n\na c_n\cdot v_n dxd\tau
  \le \int_\Omega(\rho_nc_n)(t) dx
  - \int_0^t\int_\Omega(\pa_\tau c_n)^2 dxd\tau \\
  &- \frac{1}{2}\int_\Omega(|\na c_n(t)|^2 + c_n(t)^2)dx
  + \frac{1+\eps}{2}\int_\Omega(|\na c^0|^2 + (c^0)^2)dx
  + \eps\int_0^t\int_\Omega \rho_n^2 dxd\tau.
\end{align*}
Inserting \eqref{3.qineq} and the previous inequality into \eqref{3.aux0} finishes the proof.
\end{proof}

It remains to estimate the last two terms in \eqref{3.aei}. For this, we apply Lemma \ref{lem.aux}:
\begin{align}
  \int_\Omega(\rho_nc_n)(t)dx &\le C(\kappa) 
  + \frac{\kappa}{2}\int_\Omega|\na\sqrt{\rho_n(t)}|^2dx
  + \frac14\int_\Omega|\na c_n(t)|^2 dx, \label{3.rc} \\
  \eps\int_0^t\int_\Omega\rho_n^2 dxd\tau
  &\le C\eps T + \frac{C_K}{2}\eps\kappa
  \int_0^t\int_\Omega|\na\sqrt[4]{\rho_n}|^{4} dxd\tau. \label{3.r2}
\end{align}
The terms on the right-hand side of \eqref{3.rc} can be absorbed by the energy $E_K$ on the left-hand side of \eqref{3.aei}. The integral on the right-hand side of \eqref{3.r2} can be absorbed by the dissipation term $D_{ho}$. Taking into account that $\eps\le 1$, we have proved the following result.

\begin{lemma}[Modified energy inequality]\label{lem.mei}
Let $(\rho_n,c_n,v_n)$ be the local solution to \eqref{3.rhoc} and \eqref{3.vn}. Then, for $0<t\le T_n$,
\begin{align*}
  \widetilde{E}&((\rho_n,v_n,c_n)(t))
  + \int_0^t\big(D_K(\rho_n,v_n,c_n)
  + \widetilde{D}_{ho}(\rho_n,v_n,c_n)\big)d\tau \\
  &\le C(\kappa,T) + 2 E_K(\rho^0,v^0,c^0) 
  + E_{ho}(\rho^0),
\end{align*}
where
\begin{align*}
  \widetilde{E}(\rho_n,v_n,c_n)
  &= \int_\Omega\bigg(\frac{1}{2}\rho_n|v_n|^2
  + h(\rho_n) + \frac{\kappa}{2}|\na\sqrt{\rho_n}|^2 
  + \frac14(|\na c_n|^2+c_n^2) \\
  &\phantom{xx}+ \frac{\eta}{4}\rho_n^{-3}
  + \frac{\delta}{2}|\na\Delta^2\rho_n|^2\bigg)dx, \\
  \widetilde{D}_{ho}(\rho_n,v_n,c_n)
  &= \int_\Omega\bigg(\frac{4\eps}{\gamma}|\na\rho_n^{\gamma/2}|^2
  + \frac{C_K}{2}\eps\kappa\big(|\Delta\sqrt{\rho_n}|^2
  + |\na\sqrt[4]{\rho_n}|^4\big) \\
  &\phantom{xx}+ \mu|\Delta v_n|^2
  + \frac{4\eps \eta}{3}|\nabla\rho^{-3/2}|^2 
  + \eps\delta|\Delta^3\rho_n|^2
  \bigg)dx.
\end{align*}
\end{lemma}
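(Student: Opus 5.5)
The plan is to start from the approximate energy inequality \eqref{3.aei} and absorb its two right-hand terms, $\int_\Omega(\rho_nc_n)(t)dx$ and $\eps\int_0^t\int_\Omega\rho_n^2dxd\tau$, into the left-hand side by means of Lemma \ref{lem.aux}.

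First, we need the $L^1(\Omega)$ bounds on which the constants of Lemma \ref{lem.aux} depend. Integrating the mass equation in \eqref{3.rhoc} over the torus gives $\|\rho_n(t)\|_{L^1(\Omega)}=\|\rho^0\|_{L^1(\Omega)}$, since $\rho_n>0$ on $[0,T_n]$. Integrating the equation for $c_n$ gives $\frac{d}{dt}\int_\Omega c_n dx=-\int_\Omega c_n dx+\|\rho^0\|_{L^1(\Omega)}$. As $c_n>0$ by the maximum principle, this yields $\|c_n(t)\|_{L^1(\Omega)}\le\max\{\|c^0\|_{L^1(\Omega)},\|\rho^0\|_{L^1(\Omega)}\}$. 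Both bounds are uniform in $n$ and $t$.

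Second, we apply the second case of \eqref{2.rhoc}, valid for all $\gamma>1$, with $\sigma_1=\kappa/2$ and $\sigma_2=1/4$. This gives \eqref{3.rc} with a constant $C(\kappa)$ depending only on $\kappa$ and the initial data. On the left-hand side of \eqref{3.aei}, $E_K$ contains $\kappa|\na\sqrt{\rho_n}|^2+\frac12|\na c_n|^2$. Subtracting the two absorbed terms leaves the coefficients $\kappa/2$ and $1/4$ that appear in $\widetilde E$. For $c_n^2$, we simply use $\frac12 c_n^2\ge\frac14c_n^2$.

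Third, we apply the third case of \eqref{2.rho2} with $\sigma=C_K\kappa/2$ and multiply by $\eps$. This gives \eqref{3.r2}, whose gradient term is half of the $C_K\eps\kappa|\na\sqrt[4]{\rho_n}|^4$ contribution in $D_{ho}$. We also drop half of the $|\Delta\sqrt{\rho_n}|^2$ term, which produces $\widetilde D_{ho}$. The remaining constant $C\eps t\le CT$ (using $\eps\le1$) is merged with $C(\kappa)$ into $C(\kappa,T)$. Finally, $(1+\eps)E_K(\rho^0,v^0,c^0)\le 2E_K(\rho^0,v^0,c^0)$ because $\eps\le1$.

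The only delicate point is the uniformity of the constants, which hinges on the $L^1$ bounds of the first step. For the bound on $c_n$ I would rely on the positivity of $c_n$, which follows from the maximum principle stated above. Alternatively, one could estimate $\|c_n\|_{L^1}\le|\Omega|^{1/2}\|c_n\|_{L^2}$, but that would be circular here. Everything else is bookkeeping of coefficients.
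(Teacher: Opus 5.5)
Your proposal is correct and matches the paper's argument: you apply the second case of \eqref{2.rhoc} with $\sigma_1=\kappa/2$, $\sigma_2=1/4$ and the third case of \eqref{2.rho2} with $\sigma=C_K\kappa/2$, absorb the resulting terms into $E_K$ and $D_{ho}$ in \eqref{3.aei}, and use $\eps\le 1$. Your explicit derivation of the $L^1$ bounds for $\rho_n$ and $c_n$, uniform in $n$ and $t$, fills in a point the paper leaves implicit; it uses mass conservation, the ODE for $\int_\Omega c_n\,dx$, and the positivity of $c_n$.
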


The estimates of the previous lemma allow us to extend the local solution to a global one. 


\subsection{Limit $n\to\infty$}\label{sec.limitn}

The limit $n\to\infty$ can be performed as in \cite[Sec.~2]{VaYu16} except for the new chemotaxis terms. Lemma \ref{lem.mei} yields the following bounds uniform in $n$:
\begin{align*}
  \|\sqrt{\rho_n}v_n\|_{L^2(\Omega_T)}
  + \|\sqrt[4]{\rho_n}v_n\|_{L^4(\Omega_T)}
  + \|\sqrt{\rho_n}\mathbb{D}(v_n)\|_{L^2(\Omega_T)}
  + \sqrt{\delta}\|\rho_n\|_{L^\infty(0,T;H^5(\Omega))}
  \le C.
\end{align*}
Moreover, we deduce from the uniform bounds for $\sqrt{\rho_n}$ in $L^\infty(0,T;H^1(\Omega))\subset L^\infty(0,T;$ $L^6(\Omega))$ and $L^2(0,T;H^2(\Omega))$ as well as the Gagliardo--Nirenberg inequality that $(\sqrt{\rho_n})$ is bounded in $L^{10}(\Omega_T)$.
This bound does not arise from the higher-order approximation terms.
Consequently,
\begin{align*}
  \pa_t\rho_n &= -(4\na\sqrt[4]{\rho_n})\cdot(\sqrt[4]{\rho_n}v_n)
  \sqrt{\rho_n} - \sqrt{\rho_n}(\sqrt{\rho_n}\diver v_n) \\
  &\phantom{xx} + 2\sqrt{\eps\rho_n}
  (\sqrt\eps\Delta\sqrt{\rho_n})
  + 8\sqrt{\eps\rho_n}(\sqrt\eps|\na\sqrt[4]{\rho_n}|^2)
\end{align*}
is uniformly bounded in $L^{5/3}(\Omega_T)$. We deduce from the Aubin--Lions lemma that there exists a subsequence (not relabeled) such that, as $n\to\infty$,
\begin{align*}
  \rho_n\to\rho\quad\mbox{strongly in }L^q(\Omega_T)\quad
  \mbox{for all } q<10.
\end{align*}
The bounds for $(\pa_tc_n)$ in $L^2(\Omega_T)$ and $(c_n)$ in $L^2(0,T;H^1(\Omega))$ yield the existence of a subsequence such that
\begin{align*}
  c_n\to c_n\quad\mbox{strongly in }L^2(\Omega_T), \quad
  \na c_n\rightharpoonup\na c\quad\mbox{weakly in }L^2(\Omega_T).
\end{align*}
We conclude that
\begin{align*}
  \rho_n\na c_n\rightharpoonup \rho\na c\quad\mbox{weakly in }
  L^1(\Omega_T). 
\end{align*}
Together with the convergence results in \cite[Sec.~2]{VaYu16}, this allows us to perform the limit $n\to\infty$ in equations \eqref{1.rho} for the mass and concentration as well as in the momentum equation \eqref{3.vn}. 

It remains to perform the limit $n\to\infty$ in the approximate energy inequality \eqref{3.aei}, which is done by exploiting the weak lower semicontinuity of convex functions except for the integral $\int_\Omega\rho_n(t)c_n(t)dx$. Since $(\rho_n)$ is bounded in $L^\infty(0,T;H^5(\Omega))$, $(c_n)$ is bounded in $L^\infty(0,T;H^1(\Omega))$, and the embeddings $H^5(\Omega)\hookrightarrow L^\infty(\Omega)$, $H^1(\Omega)\hookrightarrow L^2(\Omega)$ are compact, the Aubin--Lions lemma implies that (up to a subsequence)
\begin{align*}
  \rho_n\to\rho_n&\quad\mbox{strongly in }
  C^0([0,T];L^\infty(\Omega)), \\
  \quad c_n\to c&\quad\mbox{strongly in }C^0([0,T];L^2(\Omega)).
\end{align*}
This shows that
\begin{align*}
  \int_\Omega\rho_n(t)c_n(t)dx \to \int_\Omega\rho(t)c(t)dx
  \quad\mbox{for }0\le t\le T.
\end{align*}

We summarize our results.

\begin{proposition}\label{prop.approx}
Let $T>0$ and let the initial data satisfy \eqref{1.init}. Then there exists a solution $(\sqrt\rho,\sqrt\rho v,c)$ to \eqref{3.rhoc}--\eqref{3.v} such that equations \eqref{3.rhoc} are satisfied a.e.\ and \eqref{3.v} is satisfied in the sense
\begin{align}\label{3.wv}
  -&\int_\Omega\rho^0 v^0\cdot\theta(0,\cdot)dx
  - \int_0^T\int_\Omega\rho v\cdot\pa_t\theta dxdt \\
  &= \int_0^T\int_\Omega\bigg(\rho v\otimes v:\na\theta
  + p(\rho)\diver\theta - \nu v\cdot\diver(\rho\mathbb{D}(\theta))
  + \rho\na c\cdot\theta - \frac{\rho v}{\zeta}\cdot \theta \bigg)dxdt 
  \nonumber \\
  &\phantom{xx}
  - \int_0^T\int_\Omega\bigg(\kappa\frac{\Delta\sqrt\rho}{\sqrt\rho}
  \diver(\rho\theta) + r_0 v\cdot\theta + r_1\rho|v|^2v\cdot\theta
  \bigg)dxdt \nonumber \\
  &\phantom{xx}
  - \int_0^T\int_\Omega\bigg(\eps (v\otimes\na\rho):\na\theta
  + \Delta v\cdot\Delta\theta + \nu\rho^{-3}\diver\theta
  + \delta\Delta^3\rho\Delta^2\diver(\rho\theta)\bigg)dxdt
  \nonumber 
\end{align}
for all $\theta\in L^\infty(0,T;H^4(\Omega;\R^3))\cap L^2(0,T;H^5(\Omega;\R^3))$ satisfying $\theta(T,\cdot)=0$. Furthermore, the approximate energy inequality \eqref{3.aei} holds with $(\rho_n,v_n,c_n)$ replaced by $(\rho,v,c)$. 
\end{proposition}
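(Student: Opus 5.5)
The statement is Proposition~\ref{prop.approx}, the existence of a solution to the approximate system \eqref{3.rhoc}--\eqref{3.v} obtained in the limit $n\to\infty$ of the Faedo--Galerkin scheme. I would prove it by collecting the arguments of Subsection~\ref{sec.limitn}, in the following order.

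\emph{Step 1 (global Galerkin solutions).} Lemma~\ref{lem.mei} gives a bound on $\sup_{t\le T_n}\widetilde E((\rho_n,v_n,c_n)(t))$ that is independent of $T_n$ and $n$. Since all norms on $X_n$ are equivalent, and since $\rho_n$ is bounded from below through the $\eta\rho_n^{-3}$ and $\delta|\na\Delta^2\rho_n|^2$ terms together with \eqref{3.pos}, we obtain a bound on $\|v_n(t)\|_{X_n}$. A standard continuation argument then extends $(\rho_n,c_n,v_n)$ to $[0,T]$.

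\emph{Step 2 (uniform bounds).} From $\widetilde E$ and $D_K+\widetilde D_{ho}$ we read off bounds uniform in $n$:
\begin{itemize}
\item $\sqrt{\rho_n}v_n$ in $L^\infty(0,T;L^2)$ and $\sqrt[4]{\rho_n}v_n$ in $L^4(\Omega_T)$;
\item $\sqrt{\rho_n}\mathbb D(v_n)$ in $L^2(\Omega_T)$ and $\sqrt\mu\,\Delta v_n$ in $L^2(\Omega_T)$;
\item $\rho_n$ in $L^\infty(0,T;H^5)$, with $\sqrt\delta$ weight;
\item $\sqrt{\rho_n}$ in $L^\infty(0,T;H^1)\cap L^2(0,T;H^2)$, hence in $L^{10}(\Omega_T)$ by Gagliardo--Nirenberg;
\item $c_n$ in $L^\infty(0,T;H^1)\cap H^1(0,T;L^2)$.
\end{itemize}
Writing $\pa_t\rho_n$ as in Subsection~\ref{sec.limitn} shows that it is bounded in $L^{5/3}(\Omega_T)$. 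Aubin--Lions then gives $\rho_n\to\rho$ strongly in $L^q(\Omega_T)$ for $q<10$, and in $C^0([0,T];L^\infty)$ because of the $H^5$ bound. It also gives $c_n\to c$ strongly in $C^0([0,T];L^2)$, with $\na c_n\rightharpoonup\na c$ weakly in $L^2$. The $\mu\Delta^2$ term gives $v_n$ bounded in $L^2(0,T;H^2)$. Combined with the time-derivative bound on $\rho_nv_n$ from the Galerkin equation in a negative Sobolev space, this yields strong convergence of $\rho_nv_n$ and hence of $v_n$, as in \cite[Sec.~2]{VaYu16}.

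\emph{Step 3 (passing to the limit).} The mass and concentration equations are linear in the converging quantities, so they pass to the limit, and the same holds a.e.\ by regularity. In \eqref{3.vn} we fix a test function in $X_m$, let $n\to\infty$, and then use density in $L^\infty(0,T;H^4)\cap L^2(0,T;H^5)$. The chemotaxis term is a strong--weak product, so $\rho_n\na c_n\rightharpoonup\rho\na c$. The Korteweg term is handled by writing it as $\kappa\frac{\Delta\sqrt\rho}{\sqrt\rho}\diver(\rho\theta)$, which converges since $\rho_n$ is bounded below uniformly in $n$ (the bound depends on $\eta,\delta$) and converges strongly in $H^4$-type norms. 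The higher-order terms follow as in \cite{VaYu16}.

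\emph{Step 4 (energy inequality).} For the energy inequality we use weak lower semicontinuity of the convex terms on the left-hand side of \eqref{3.aei}. On the right-hand side we use $\int_\Omega\rho_n(t)c_n(t)\,dx\to\int_\Omega\rho(t)c(t)\,dx$ from the uniform-in-time strong convergences, and $\eps\int\rho_n^2\to\eps\int\rho^2$ from the $L^q$ convergence.

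The main obstacle is the convergence of the nonlinear velocity terms. The convective term, the drag $r_1\rho|v|^2v$ and $\eps(v\otimes\na\rho)$ all require strong convergence of $v_n$. I would obtain this from the $\mu$-regularization, which gives $v_n\in L^2(0,T;H^2)$, together with the positive lower bound on $\rho_n$ and the strong convergence of $\rho_nv_n$. This is exactly the route of \cite[Sec.~2]{VaYu16}, so only the chemotaxis terms are genuinely new.
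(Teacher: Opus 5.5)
Your proposal is correct and follows essentially the same route as the paper. Both arguments use the same ingredients:
\begin{itemize}
\item uniform bounds from Lemma~\ref{lem.mei} to globalize the Galerkin solutions;
\item Aubin--Lions compactness for $\rho_n$ (via the $L^{10}$ bound on $\sqrt{\rho_n}$ and the $L^{5/3}$ bound on $\partial_t\rho_n$) and for $c_n$;
\item a strong--weak product for the chemotaxis force, with the remaining terms delegated to \cite[Sec.~2]{VaYu16};
\item a density argument that extends the class of test functions;
\item the energy inequality, obtained from weak lower semicontinuity together with $\int_\Omega\rho_n(t)c_n(t)\,dx\to\int_\Omega\rho(t)c(t)\,dx$ via convergence in $C^0([0,T];L^\infty)$ and $C^0([0,T];L^2)$.
\end{itemize}
Your extra details fill in steps the paper leaves implicit and are consistent with it. These are the continuation argument via \eqref{3.pos}, the strong convergence of $v_n$ from the $\mu$-regularization and the lower bound on $\rho_n$, and the limit in $\eps\int_0^t\int_\Omega\rho_n^2\,dx\,d\tau$.
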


The weak formulation \eqref{3.vn} and consequently \eqref{3.wv} first hold for test functions $\theta\in C^1([0,T);X_n)$ for all $n\in\N$. A standard density argument extends the validity of \eqref{3.wv} to the broader set of test functions specified in Proposition \ref{prop.approx}. 


\subsection{BD entropy inequality}

We derive the BD entropy inequality for the approximate system \eqref{3.rhoc}--\eqref{3.v} yielding estimates uniform in $(\delta,\eps,\eta,\mu)\to 0$. 

\begin{lemma}[BD entropy inequality]\label{lem.bdent}
Let $(\sqrt\rho,\sqrt\rho v,c)$ be the solution to \eqref{3.rhoc}--\eqref{3.v} on $[0,T]$ constructed in Proposition \ref{prop.approx}. Then there exist constants $M=M(\nu,r_1)>2$,  $C_1(\delta,\eta,\kappa,r_0,$ $r_1,T)>0$, $C_2(\kappa,T)>0$, and $C_3>0$ (independent of all approximation parameters) such that
\begin{align}\label{3.bdineq}
  &\frac{M-2}{2}E_K((\rho,v,c)(t)) 
  + M E_{ho}(\rho(t)) 
  + \frac{\nu^2}{2}\int_\Omega |\na\sqrt{\rho(t)}|^2 dx
  + \nu r_0\int_\Omega(\log\rho(t))_- dx \\
  &\phantom{xx}+ \frac{M}{2}\int_0^t\int_\Omega\bigg(
  \frac{\rho|v|^2}{\zeta} + r_0|v|^2 + r_1\rho|v|^4\bigg)dxd\tau 
  \nonumber \\
  &\phantom{xx}+ (M-1)\int_0^t\int_\Omega\big(\nu\rho|\mathbb{D}(v)|^2
  + |\pa_\tau c|^2\big)dxd\tau \nonumber \\
  &\phantom{xxx}+ \int_0^t\int_\Omega\bigg(
  \frac{\nu}{4}\rho|\na v-\na v^T|^2
  + \frac{4\nu}{\gamma}|\na\rho^{\gamma/2}|^2
  + C_K\nu\kappa\big(|\na\sqrt[4]{\rho}|^4
  + |\Delta\sqrt\rho|^2\big)\bigg)dxd\tau \nonumber \\
  &\phantom{xxx}+ \int_0^t\int_\Omega\bigg(
  \nu\delta|\Delta^3\rho|^2 
  + \frac{4\eta\nu}{3}|\nabla\rho^{-3/2}|^2
  + \mu M|\Delta v|^2 + \eps\nu^2\frac{|\Delta\rho|^2}{\rho}
  \bigg)dxd\tau \nonumber \\
  &\phantom{x}\le \bigg(\eps + \sqrt\mu + \frac{\eps}{\sqrt\mu}\bigg)
  C_1(\delta,\eta,\kappa,r_0,r_1,T) + C_2(\kappa,T) 
  + C_3\nu r_1 \nonumber 
\end{align}
for $0<t<T$, where the energies $E_K$ and $E_{ho}$ are defined in \eqref{3.EK} and \eqref{3.Eho}, respectively. If $\gamma > 4/3$, the constant $C_2=C_2(T)$ does not depend on $\kappa$.
\end{lemma}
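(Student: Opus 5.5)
The plan is to follow the BD entropy computation of \cite[Sec.~3]{VaYu16} and keep track of the new chemotaxis contributions. Since Proposition \ref{prop.approx} gives a solution with $\rho$ smooth and, by \eqref{3.pos}, strictly positive, $\na\log\rho$ is an admissible test function. We test the momentum equation \eqref{3.wv} with $\theta=\nu\na\log\rho$ and the mass equation with $-\nu\Delta\log\rho$ (or equivalently $\frac{\nu^2}{2}$-type combinations), which yields the evolution of $\frac12\int\rho|v+\nu\na\log\rho|^2dx$ minus the kinetic energy. We then add $M$ times the approximate energy inequality \eqref{3.aei} for the limit solution, with $M$ chosen large depending on $\nu$ and $r_1$. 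This produces the left-hand side of \eqref{3.bdineq}.

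The terms are treated as follows. The pressure term gives $\frac{4\nu}{\gamma}\int|\na\rho^{\gamma/2}|^2$. The Korteweg term gives $\frac{\nu\kappa}{2}\int\rho|\na^2\log\rho|^2\ge C_K\nu\kappa(\ldots)$ by \eqref{1.qineq}, as in \eqref{3.qineq}. The viscous term combines with the cross term to give the antisymmetric part $\frac\nu4\rho|\na v-\na v^T|^2$, in the usual BD fashion.

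The drag terms need separate handling:
\begin{itemize}
\item $-\nu r_0\int v\cdot\na\log\rho$ is rewritten via the mass equation as $\nu r_0\frac{d}{dt}\int\log\rho$ plus an $\eps$-error. This produces the $(\log\rho)_-$ term, using $\int(\log\rho)_+\le\int\rho$.
\item $-\nu r_1\int|v|^2v\cdot\na\rho$ is bounded by $\frac M2 r_1\int\rho|v|^4$ plus $C\nu r_1\int|\na\sqrt[4]\rho|^4$-type terms. One part is absorbed using $\kappa$; the remainder is estimated by an $r_1$-multiple, giving the $C_3\nu r_1$ term. This is why $M$ depends on $\nu,r_1$.
\item The relaxation term $\frac1\zeta\int\rho v\cdot\na\log\rho=\frac1\zeta\int v\cdot\na\rho$ is handled the same way through the mass equation.
\end{itemize}

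The higher-order regularizations ($\eta,\delta,\mu$ and the $\eps$ terms) are handled exactly as in \cite{VaYu16}. They produce the dissipations $\nu\delta|\Delta^3\rho|^2$, $\frac{4\eta\nu}3|\na\rho^{-3/2}|^2$ and $\eps\nu^2|\Delta\rho|^2/\rho$, plus error terms bounded by $(\eps+\sqrt\mu+\eps/\sqrt\mu)C_1$. These error terms use the $n$-uniform bounds of Lemma \ref{lem.mei}, which depend on $\delta,\eta,\kappa,r_0,r_1,T$. The $\mu$-term $\mu\int\Delta v\cdot\Delta\na\log\rho$ is bounded by Young's inequality into $\sqrt\mu\|\Delta v\|^2$-type pieces controlled by $\mu M|\Delta v|^2$, plus $\sqrt\mu$ times higher norms of $\rho$ bounded via $\delta$.

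The main obstacle is the chemotaxis force in the BD part, $K_1=\nu\int_0^t\int\rho\na c\cdot\na\log\rho=\nu\int_0^t\int\na\rho\cdot\na c$. We integrate by parts and insert the $c$-equation: $K_1=-\nu\int\rho\Delta c=-\nu\int\rho(\pa_\tau c+c-\rho)$. Exactly as in the derivation of \eqref{3.cdelta}, writing $\rho=\pa_\tau c-\Delta c+c$ gives
\[
K_1\le \frac\nu2\int_\Omega(|\na c^0|^2+(c^0)^2)dx+\nu\int_0^t\int_\Omega\rho^2dxd\tau
\]
after dropping nonpositive terms. We then apply Lemma \ref{lem.aux}, \eqref{2.rho2}:
\begin{itemize}
\item For $\gamma\ge2$, the bound $\int\rho^2\le Ct+\int\rho^\gamma$ is controlled by the energy $h(\rho)$ in $ME_K$, integrated in time.
\item For $4/3<\gamma<2$, we use $\sigma\int|\na\rho^{\gamma/2}|^2$ with $\sigma$ small, absorbed by $\frac{4\nu}\gamma|\na\rho^{\gamma/2}|^2$. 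The constant is then independent of $\kappa$.
\item For general $\gamma>1$, we use $\sigma\int|\na\sqrt[4]\rho|^4$ absorbed by $C_K\nu\kappa|\na\sqrt[4]\rho|^4$. Here the constant depends on $\kappa$.
\end{itemize}

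Finally, the term $M\int(\rho c)(t)$ coming from \eqref{3.aei} is handled by \eqref{2.rhoc} with the $|\na\sqrt\rho|^2$ alternative. It is absorbed by $\frac{\nu^2}{2}\int|\na\sqrt\rho|^2$ and $\frac14\int|\na c|^2$ with constants independent of $\kappa$; only $L^1$ norms of $\rho$ and $c$ enter, and these are conserved or bounded by the data. The term $M\eps\int\rho^2$ is treated the same way. This gives \eqref{3.bdineq}, with $C_2$ independent of $\kappa$ when $\gamma>4/3$.
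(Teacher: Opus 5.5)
Most of your outline follows the paper's proof: the BD computation plus $M$ times \eqref{3.aei}, the $r_0$ term via $\pa_t\log\rho$, $K_1$ via \eqref{3.cdelta} and \eqref{2.rho2}, and $M\int_\Omega\rho c\,dx$ via the $|\na\sqrt\rho|^2$ branch of \eqref{2.rhoc}. The gap is the $r_1$ drag term, and it breaks exactly the constant structure the lemma asserts.

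\textbf{Why your $r_1$ estimate fails.} You split $\nu r_1|\na\rho\cdot v|\,|v|^2 = 4\nu r_1\rho^{3/4}|v|^3|\na\sqrt[4]{\rho}|$ by Young so that the $\rho|v|^4$ part fits under $\frac M2 r_1\rho|v|^4$. This leaves a term of size about $(\nu^4/M^3)\,r_1\int_0^t\int_\Omega|\na\sqrt[4]{\rho}|^4dxd\tau$. The only dissipation controlling $|\na\sqrt[4]{\rho}|^4$ uniformly in $\eps$ is $C_K\nu\kappa\int_0^t\int_\Omega|\na\sqrt[4]{\rho}|^4dxd\tau$, and it carries no factor $M$. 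Absorbing the remainder therefore forces $M\ge c\,\nu(r_1/\kappa)^{1/3}$, so $M$ depends on $\kappa$, not just on $(\nu,r_1)$. The constant produced by $M\int_\Omega\rho c\,dx$ (Lemma \ref{lem.aux} with $\sigma_1$ of order $\nu^2/M$) grows with $M$. Hence $C_2$ depends on $\kappa$ even for $\gamma>4/3$, which is precisely what must be avoided to let $\kappa\to0$ later. Your alternative, estimating the remainder ``by an $r_1$-multiple'' to produce $C_3\nu r_1$, has no mechanism. There is no parameter-independent bound on $\int_0^t\int_\Omega|\na\sqrt[4]{\rho}|^4dxd\tau$; the only one is $O(1/\kappa)$, and it comes from the inequality being proved.

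\textbf{The fix.} Integrate by parts so that no derivative falls on $\rho$:
$K_4=\nu r_1\int_0^t\int_\Omega\rho\diver(|v|^2v)\,dxd\tau=\nu r_1\int_0^t\int_\Omega\rho\big(2\sum_{i,j}\mathbb{D}(v)_{ij}v_iv_j+|v|^2\diver v\big)dxd\tau$.
Only $\mathbb{D}(v)$ appears because $v_iv_j$ is symmetric. Young's inequality and $|\diver v|^2\le 3|\mathbb{D}(v)|^2$ give $K_4\le\nu r_1\int_0^t\int_\Omega(\frac32\rho|v|^4+\frac52\rho|\mathbb{D}(v)|^2)\,dxd\tau$. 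Both terms are absorbed by the $M$-weighted dissipations $Mr_1\rho|v|^4$ and $(M-1)\nu\rho|\mathbb{D}(v)|^2$ once $M$ is large depending only on $\nu$ and $r_1$; $\kappa$ never enters.

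\textbf{Two smaller points.}
\begin{enumerate}
\item The relaxation term is not analogous to the $r_0$ term, since there is no factor $1/\rho$ to produce $\pa_t\log\rho$. The paper bounds $|K_2|\le\frac{2\nu}{\zeta}\int_0^t\int_\Omega\sqrt\rho|v||\na\sqrt\rho|$ and absorbs the $\frac{\nu}{\zeta}\rho|v|^2$ part using $M$. It then closes the $\frac{\nu}{\zeta}\int_0^t\int_\Omega|\na\sqrt\rho|^2$ part by Gronwall, via $\nu^2|\na\sqrt\rho|^2\le\frac12\rho|v+\nu\na\log\rho|^2+\frac12\rho|v|^2$. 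The same inequality is what produces the term $\frac{\nu^2}{2}\int_\Omega|\na\sqrt\rho|^2dx$ on the left.
\item In the $\mu$-term, a $\sqrt\mu\|\Delta v\|^2$ piece cannot be absorbed by $\mu M\|\Delta v\|^2$ as $\mu\to0$. Either split as $\mu ab\le\frac{\mu M}{2}a^2+\frac{\mu}{2M}b^2$, or use $\sqrt\mu\|\Delta v\|_{L^2(\Omega_T)}\le C$ from Lemma \ref{lem.mei}, as the paper does.
\end{enumerate}
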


Observe that  the right-hand side of \eqref{3.bdineq} becomes independent of $(\kappa,r_0,r_1)$ after having passed to the limit $(\eps,\mu,\eps/\sqrt\mu)\to 0$ in the case $\gamma>4/3$.

\begin{proof}
The proof is a lengthy computation split into several steps. 

{\em Step 1: Intermediate BD entropy inequality.} Similarly as in \cite[Lemma 12]{Zat12}, we compute the derivatives
\begin{align*}
  \int_0^t\int_\Omega\pa_\tau\bigg(\frac{\nu^2}{2}\rho|\na\log\rho|^2
  \bigg)dxd\tau,
  \quad \nu\int_0^t\int_\Omega\pa_\tau(\rho v\cdot\na\log\rho)dxd\tau
\end{align*}
and sum the resulting expressions. After some calculations, we obtain
\begin{align*}
  &\int_\Omega\bigg(\frac{\nu^2}{2}\rho|\na\log\rho|^2
  + \nu\rho v\cdot\na\log\rho\bigg)dx\bigg|_{\tau=0}^{\tau=t}
  = \nu\int_0^t\int_\Omega\bigg(\rho|\mathbb{D}(v)|^2
  + \kappa\na\rho\cdot\na\bigg(\frac{\Delta\sqrt\rho}{\sqrt\rho}\bigg) \\
  &+ \na\rho\cdot\na c - \frac14\rho|\na v-\na v^T|^2
  - \frac{4}{\gamma}|\na\rho^{\gamma/2}|^2
  - \frac{\na\rho\cdot v}{\zeta}\bigg)dxd\tau \\
  &+ \eps\nu\int_0^t\int_\Omega\bigg(
  \frac{\nu}{2}\Delta\rho|\na\log\rho|^2 
  + (\na\rho\otimes\na\log\rho):\na v - \Delta\rho\diver v
  - \nu\frac{|\Delta\rho|^2}{\rho}\bigg)dxd\tau \\
  &- \nu\int_0^t\int_\Omega\bigg(\mu\na\log\rho\cdot\Delta^2 v 
  + r_0 \na\log\rho\cdot v + r_1 \na\rho\cdot v|v|^2
  + \frac{4\eta}{3}|\nabla \rho^{-3/2}|^2 
  + \delta|\Delta^3\rho|^2\bigg)dxd\tau.
\end{align*}
We add the energy inequality \eqref{3.aei} (with $(\rho_n,v_n,c_n)$ replaced by $(\rho,v,c)$), multiplied by $M$, to the previous equation, which leads to
\begin{align}\label{3.bdent}
  \frac12&\int_\Omega\rho|v+\nu\na\log\rho|^2(t) dx
  + (M-1)E_K((\rho,v,c)(t)) + ME_{ho}(\rho(t)) \\
  &\phantom{xx}+ M\int_0^t\int_\Omega\bigg(\frac{\rho|v|^2}{\zeta}
  + r_0|v|^2 + r_1\rho |v|^4\bigg)dxd\tau
  + M\int_0^t\int_\Omega|\pa_\tau c|^2 dxd\tau \nonumber \\
  &\phantom{xx}+ \nu\int_0^t\int_\Omega\bigg(\frac14\rho|\na v-\na v^T|^2
  + \frac{4}{\gamma}|\na\rho^{\gamma/2}|^2
  + C_K\kappa\big(|\Delta\sqrt\rho|^2
  + |\na\sqrt[4]{\rho}|^4\big)\bigg)dxd\tau \nonumber \\
  &\phantom{xx}+ \nu(M-1)\int_0^t\int_\Omega\rho
  |\mathbb{D}(v)|^2 dxd\tau 
  + \int_0^t\int_\Omega\bigg(\frac{4\eta\nu}{3}
  |\nabla\rho^{-3/2}|^2
  + \nu\delta|\Delta^3\rho|^2\bigg)dxd\tau \nonumber \\
  &\phantom{xx}+ \eps\int_0^t\int_\Omega\bigg(
  \frac{4}{\gamma}M|\na\rho^{\gamma/2}|^2
  + C_K\kappa M\big(|\Delta\sqrt\rho|^2
  + |\na\sqrt[4]{\rho}|^4\big) + \mu|\Delta v|^2 \nonumber \\
  &\phantom{xx}+ \frac{4\eta M}{3}|\nabla\rho^{-3/2}|^2 
  + \delta M|\Delta^3\rho|^2
  + \nu^2\frac{|\Delta\rho|^2}{\rho}\bigg)dxd\tau \nonumber \\
  &\le I_0 + J_1 + \cdots + J_5 + K_1 + \cdots + K_5, \nonumber 
\end{align}
where
\begin{align*}
  I_0 &= M(1+\eps)E_K(\rho^0,v^0,c^0) + ME_{ho}(\rho^0)
  + \int_\Omega\bigg(\frac{\nu^2}{2}\rho^0|\na\log\rho^0|^2
  + \nu\rho^0v^0\cdot\na\log\rho^0\bigg)dx
\end{align*}
as well as
\begin{align*}
  J_1 &= \frac{\nu^2}{2}\eps\int_0^t\int_\Omega\Delta\rho|\na\log\rho|^2
  dxd\tau, &
  J_2 &= -\nu\mu\int_0^t\int_\Omega\na\Delta\log\rho\cdot
  \Delta vdxd\tau, \\
  J_3 &= \eps\nu\int_0^t\int_\Omega(\na\rho\otimes\na\log\rho):\na v 
  dxd\tau, &
  J_4 &= -\eps\nu\int_0^t\int_\Omega\Delta\rho\diver v dxd\tau, \\
  J_5 &= \eps M\int_0^t\int_\Omega\rho^2 dxd\tau, & 
  K_1 &= \nu\int_0^t\int_\Omega\na\rho\cdot\na c dxd\tau, \\
  K_2 &= -\nu\int_0^t\int_\Omega\frac{\na\rho\cdot v}{\zeta}dxd\tau, &
  K_3 &= -\nu r_0\int_0^t\int_\Omega\na\log\rho\cdot vdxd\tau, \\
  K_4 &= -\nu r_1\int_0^t\int_\Omega\na\rho\cdot v|v|^2 dxd\tau, &
  K_5 &= M\int_\Omega\rho(t)c(t)dx.  
\end{align*}
Observe that $J_i$ depends on the parameters $(\eps,\mu)$, which will tend to zero first, while $K_3$ and $K_4$ depend on $(r_0,r_1)$, which are fixed at this point. 

{\em Step 2: Positivity of $\rho$.} We claim that $\rho$ is positive a.e. We know from the modified energy inequality in Lemma \ref{lem.mei} that
\begin{align}\label{3.estrho}
  \eta^{1/3}\|\rho^{-1}\|_{L^\infty(0,T;L^3(\Omega))}
  + \sqrt{\delta}\|\rho\|_{L^\infty(0,T;H^5(\Omega))} \le C(\kappa).
\end{align}
Inequality \eqref{3.pos} then yields
\begin{align}\label{3.rho-1}
  \|\rho^{-1}\|_{L^\infty(\Omega_T)}
  \le C(1 + C\eta^{-1/3})^3(1+C\delta^{-1/2})^2
  \le C(1+\eta^{-1})(1+\delta^{-1}) =: C_{\delta,\eta}.
\end{align}
Thus, $\rho^{-1}$ is bounded a.e.\ in $\Omega_T$, which proves the claim. 

{\em Step 3: Estimation of $J_i$.} We start with the term $J_1$ and use \eqref{3.estrho}:
\begin{align*}
  |J_1| \le \eps\nu^2\|\rho^{-1}\|_{L^\infty(\Omega_T)}^2
  \|\rho\|_{L^2(0,T;H^2(\Omega))}\|\na\rho\|_{L^4(\Omega_T)}^2
  \le C\eps\delta^{-1/2}\delta^{-1}C_{\delta,\eta}^2.
\end{align*}
It holds that
\begin{align*}
  |J_2|\le \nu\mu\|\Delta\na\log\rho\|_{L^2(\Omega_T)}
  \|\Delta v\|_{L^2(\Omega_T)}.
\end{align*} 
We calculate $\na\Delta\log\rho$. Let $\pa_i=\pa/\pa x_i$. Then
\begin{align*}
  \pa_i\pa_j^2\log\rho = \frac{\pa_i\pa_j^2\rho}{\rho}
  - \frac{\pa_j^2\rho\pa_i\rho}{\rho^2}
  - 2\frac{\pa_i\pa_j\rho\pa_j\rho}{\rho^2}
  + 2\frac{\pa_i\rho(\pa_j\rho)^2}{\rho^3}.
\end{align*}
The denominators are bounded in $L^\infty(\Omega_T)$ because of \eqref{3.estrho}, so it remains to estimate the nominators in the $L^2(\Omega_T)$ norm. We deduce from \eqref{3.estrho} that 
\begin{align*}
  \|\pa_i\pa_j^2\rho\|_{L^2(\Omega_T)} 
  &\le \|\rho\|_{L^2(0,T;H^3(\Omega))} \le C(\kappa)\delta^{-1/2}, \\
  \|\pa_i\pa_j\rho\pa_i\rho\|_{L^2(\Omega_T)}
  &\le \|\rho\|_{L^\infty(0,T;W^{2,4}(\Omega))}
  \|\rho\|_{L^2(0,T;W^{1,4}(\Omega))} \\
  &\le C\|\rho\|_{L^\infty(0,T;H^{3}(\Omega))}
  \|\rho\|_{L^2(0,T;H^{2}(\Omega))} \le C(\kappa)\delta^{-1}, \\
  \|\pa_i\rho(\pa_j\rho)^2\|_{L^2(\Omega_T)}
  &\le \|\rho\|_{L^6(0,T;W^{1,6}(\Omega))}^3
  \le C\|\rho\|_{L^\infty(0,T;H^2(\Omega))}^3 
  \le C(\kappa)\delta^{-3/2}.
\end{align*}
Together with the bound $\sqrt\mu\|\Delta v\|_{L^2(\Omega_)}\le C(\kappa)$ from Lemma \ref{lem.mei}, we obtain 
\begin{align*}
  |J_2|\le C(\kappa)(1+C_{\delta,\eta}^3)(1+\delta^{-3/2})\mu^{1/2}.
\end{align*}
It follows from Lemma \ref{lem.mei} (modified energy inequality) that
\begin{align*}
  |J_3| &\le 16\eps\nu\int_0^t\int_\Omega
  \big|\sqrt\rho(\na\sqrt[4]{\rho}
  \otimes\na\sqrt[4]{\rho}):\mathbb{D}(v)\big|dxd\tau \\
  &\le C\eps\|\sqrt\rho\mathbb{D}(v)\|_{L^2(\Omega_T)}
  \|\na\sqrt[4]{\rho}\|_{L^4(\Omega_T)}^2
  \le C(\kappa)\eps\cdot\eps^{-1/2} = C(\kappa)\eps^{1/2}.
\end{align*}
Integrating by parts in $J_4$ several times, 
\begin{align*}
  \int_\Omega\Delta\rho\diver v dx
  = \int_\Omega\sum_{i,j=1}^d
  \frac{\pa^2\rho}{\pa x_i^2}\frac{\pa v_j}{\pa x_j}dx
  = -\int_\Omega \sum_{i,j=1}^d
  \frac{\pa\rho}{\pa x_j}\frac{\pa^2 v_j}{\pa x_i^2}dx
  = -\int_\Omega\na\rho\cdot\Delta v dx,
\end{align*}
we find that
\begin{align*}
  |J_4| \le C\eps\|\rho\|_{L^2(0,T;H^1(\Omega))}
  \|\Delta v\|_{L^2(\Omega_T)}\le C(\kappa)\eps(\delta\mu)^{-1/2}.
\end{align*}
Finally, because of Lemma \ref{lem.aux},
\begin{align*}
  |J_5|\le C\eps MT + \eps C_K \kappa M\int_0^t\int_\Omega
  |\na\sqrt[4]{\rho}|^4 dxd\tau,
\end{align*}
and the integral can be absorbed by the left-hand side of \eqref{3.bdent}. Summarizing, we find that
\begin{align}\label{3.J}
  |J_1|+\cdots+|J_5|\le C(\delta,\eta,\kappa)
  \bigg(\sqrt{\eps} + \sqrt{\mu} + \frac{\eps}{\sqrt\mu}\bigg) 
  + \eps C_K \kappa M\int_0^t\int_\Omega|\na\sqrt[4]{\rho}|^4 dxd\tau.
\end{align}

{\em Step 4: Estimation of $K_i$.} We conclude from \eqref{3.cdelta} that
\begin{align*}
  K_1 = -\nu\int_0^t\int_\Omega\Delta\rho c dx d\tau
  \le \frac{\nu}{2}\int_\Omega(|\na c^0|^2 + (c^0)^2)dx
  + \nu\int_0^t\int_\Omega \rho^2 dxd\tau.
\end{align*}
We use Lemma \ref{lem.aux} to estimate the last integral on the right-hand side. Here, we distinguish between the cases $\gamma>1$ and $\gamma>4/3$. If $\gamma>1$, we have for any $\sigma>0$,
\begin{align*}
  K_1 \le C(c^0,\nu,T) + C(\sigma) 
  + \sigma\int_0^t\int_\Omega |\na\sqrt[4]{\rho}|^4 dxd\tau,
\end{align*}
and the integral on the right-hand side can be absorbed by the left-hand side of \eqref{3.bdent} after choosing $\sigma<C_K\nu\kappa/2$. Here, the constant $C(\sigma)$ depends on $\kappa$. This dependency can be avoided by choosing $\gamma>4/3$. Indeed, we find that
\begin{align*}
  K_1 \le C(c^0,\nu,T) + \frac{2\nu}{\gamma}
  \int_0^t\int_\Omega|\na\rho^{\gamma/2}|^2dxd\tau,
\end{align*}
and the last integral can be absorbed by the left-hand side of \eqref{3.bdent} without any dependency on $\kappa$.

Next, it follows from Young's inequality that
\begin{align*}
  K_2 = 2\frac{\nu}{\zeta}\int_0^t\int_\Omega\sqrt{\rho} v
  \cdot\na\sqrt\rho dxd\tau
  \le \frac{\nu}{\zeta}\int_0^t\int_\Omega|\sqrt\rho v|^2 dxd\tau
  + \frac{\nu}{\zeta}\int_0^t\int_\Omega|\na\sqrt\rho|^2 dxd\tau.
\end{align*}
The first term on the right-hand side can be absorbed by the left-hand side of \eqref{3.bdent}, while the second term will be estimated later.

The estimate of $K_3$ is more involved. We deduce from the mass equation that $\na\rho\cdot v = \eps\Delta\rho - \pa_t\rho - \rho\diver v$ and therefore, using $\int_\Omega\diver v dx=0$,
\begin{align*}
  K_3 &= -\nu r_0\int_0^t\int_\Omega\frac{\na\rho\cdot v}{\rho}dxd\tau
  = \nu r_0\int_0^t\int_\Omega\frac{1}{\rho}(\rho\diver v + \pa_t\rho 
  - \eps\Delta\rho)dxd\tau \\
  &= \nu r_0\int_0^t\int_\Omega\bigg(\pa_t\log\rho
  - \eps\frac{\Delta\rho}{\rho}\bigg)dxd\tau \\
  &\le \nu r_0\int_\Omega\big(\log\rho(t) - \log\rho^0\big)dx
  + \eps\nu r_0\|\rho\|_{L^2(0,T;H^2(\Omega))}
  \|\rho^{-1}\|_{L^\infty(\Omega_T)}.
\end{align*} 
It follows from $\log\rho(t)=(\log\rho(t))_+ - (\log\rho(t))_-
\le \rho(t)-(\log\rho(t))_-$ and mass conservation that
\begin{align*}
  K_3 &\le \nu r_0\int_\Omega\rho(t)dx 
  - \nu r_0\int_\Omega(\log\rho(t))_-dx
  - \nu r_0\int_\Omega\log\rho^0 dx \\
  &\phantom{xx}+ \eps\nu r_0\|\rho\|_{L^2(0,T;H^2(\Omega))}
  \|\rho^{-1}\|_{L^\infty(\Omega_T)} \\
  &\le C(\rho^0) - \nu r_0\int_\Omega(\log\rho(t))_-dx
  + C(\kappa)C_{\delta,\eta}\delta^{-1/2}\eps \\
  &\le C(\rho^0) + C(\delta,\eta,\kappa)\eps
  - \nu r_0\int_\Omega(\log\rho(t))_-dx,
\end{align*}
where we used \eqref{3.rho-1} in the next-to-last step. By Young's inequality,
\begin{align*}
  K_4 &= \nu r_1\int_0^t\int_\Omega \diver(|v|^2 v)\rho dxd\tau
  =  \nu r_1\int_0^t\int_\Omega\big(2\rho v\cdot\na v\cdot v
  + \rho|v|^2\diver v\big)dxd\tau \\
  &=  \nu r_1\int_0^t\int_\Omega\bigg(2\rho\sum_{i,j=1}^d 
  \mathbb{D}(v)_{ij}v_iv_j + \rho|v|^2\diver v\bigg)dxd\tau \\
  &\le  \nu r_1\int_0^t\int_\Omega\rho\sum_{i,j=1}^d(|v_i|^2|v_j|^2
  + |\mathbb{D}(v)_{ij}|^2)dxd\tau 
  + \frac{\nu}{2}r_1\int_0^t\int_\Omega\rho(|v|^4 + |\diver v|^2)dxd\tau.
\end{align*}
Then, since $|\diver v|^2\le 3|\mathbb{D}(v)|^2$,
\begin{align*}
  K_4 \le  \nu r_1\int_0^t\int_\Omega\bigg(\frac32\rho|v|^4 
  + \frac52\rho|\mathbb{D}(v)|^2\bigg)dxd\tau.
\end{align*}
The right-hand side can be absorbed by the left-hand side of \eqref{3.bdent} choosing $M$ sufficiently large (and depending on $\nu$ and $r_1$). Finally, by Lemma \ref{lem.aux}, for $\sigma_1=\nu/(2M)$ and $\sigma_2=1/4$,
\begin{align*}
  K_5 \le C(\nu,M)
  + \frac{\nu^2}{2}\int_\Omega|\na\sqrt{\rho(t)}|^2 dx
  + \frac{M}{4}\int_\Omega|\na c(t)|^2 dx.
\end{align*}
The last term can be absorbed by the left-hand side of \eqref{3.bdent}. We summarize:
\begin{align}\label{3.K}
  K_1&+\cdots+K_5 \le K_1 + C(\kappa,\nu,\rho^0) 
  + C(\delta,\eta,\kappa)\eps
  + \frac{\nu}{\zeta}\int_0^t\int_\Omega\rho|v|^2 dxd\tau \\
  &+ \frac{\nu}{\zeta}\int_0^t\int_\Omega|\na\sqrt\rho|^2
  dxd\tau + \nu r_1\int_0^t\int_\Omega\bigg(\frac32\rho|v|^4 
  + \frac52\rho|\mathbb{D}(v)|^2\bigg)dxd\tau \nonumber \\
  &+ \frac{\nu^2}{2}\int_\Omega|\na\sqrt{\rho(t)}|^2 dx
  + \frac{M}{4}\int_\Omega|\na c(t)|^2 dx
  - \nu r_0\int_\Omega(\log\rho(t))_-dx. \nonumber 
\end{align}

{\em Step 5: End of the proof.} The last integral of \eqref{3.J} as well as the first, third, and fifth integral of \eqref{3.K} can be absorbed by the corresponding terms on the left-hand side of \eqref{3.bdent} by choosing $M$ sufficiently large. For the fourth integral on the right-hand side, we compute
\begin{align*}
  \nu^2|\na\sqrt\rho|^2 
  = \frac14\nu^2\rho|\na\log\rho|^2 \le \frac12\rho|v+\nu\na\log\rho|^2
  + \frac12\rho|v|^2,
\end{align*}
which yields
\begin{align*}
  \frac{\nu^2}{2}\int_\Omega|\na\sqrt\rho|^2 dx
  \le \frac{1}{4}\int_\Omega\rho|v+\nu\na\log\rho|^2 dx 
  + \frac{1}{4}\int_\Omega \rho|v|^2 dx.
\end{align*}
The right-hand side can be absorbed by the left-hand side of \eqref{3.bdent}. Finally, the second integral on the right-hand side of \eqref{3.K} can be controlled by applying Gronwall's inequality after inserting estimates \eqref{3.J} and \eqref{3.K} into \eqref{3.bdent}. This finishes the proof.
\end{proof}


\subsection{Limit $(\delta,\eps,\eta,\mu)\to 0$}

In view of the estimates obtained from the BD entropy inequality of Lemma \ref{lem.bdent}, we can pass to the limit  $(\delta,\eps,\eta,\mu)\to 0$ similarly as in \cite[Sec.~3.2--3.3]{VaYu16}. More precisely, we pass to the limit $(\eps,\mu)\to 0$ such that $\eps/\sqrt\mu\to 0$. By Lemma \ref{lem.bdent}, this eliminates the dependence of the right-hand side on $(\delta,\eta)$. Hence, we obtain estimates uniform in these parameters and we can perform the limit $(\delta,\eta)\to 0$. These limits are treated exactly as in \cite[Sec.~3.2--3.3]{VaYu16}. The chemotaxis term does not introduce additional complications and can be handled by the arguments used in the limit $n\to\infty$ (see Section \ref{sec.limitn}). The limit $(\delta,\eps,\eta,\mu)\to 0$ in the BD entropy inequality yields \eqref{3.bdineq} with the first term on the right-hand side removed and with $(\delta,\eps,\eta,\mu)=0$.

Our arguments are valid for smooth initial data. The final step to complete the proof is to extend the result to less regular initial data. This is done in the usual way by constructing a sequence of smooth functions approximating the initial conditions; see, e.g., \cite[p.~1041]{Jue10}. This ends the proof of Theorem \ref{thm.nske}. 


\section{Proof of Theorem \ref{thm.nse}}\label{sec.nse}

We wish to pass to the limit $(\kappa,r_0,r_1)\to 0$ in \eqref{1a.rhoc}--\eqref{1a.v}. To this end, we proceed as in \cite{LaVa18}. First, we prove that the weak solution constructed in Theorem \ref{thm.nske} is also a renormalized solution in the sense specified in Definition \ref{def.weak}. Then we show that any renormalized solution is in fact a weak solution. Finally, we perform the limit $(\kappa,r_0,r_1)\to 0$.

\subsection{From weak to renormalized solutions}

The task is to reformulate the momentum equation \eqref{1a.v} for renormalized solutions. Let $(\sqrt\rho,\sqrt\rho v,c)$ be a weak solution to \eqref{1a.rhoc}--\eqref{1a.v}. The idea of renormalized solutions is to multiply the momentum equation by $\varphi'(v)$, where $\varphi$ is some smooth function. As indicated in \cite[Sec.~3]{LaVa18}, the regularity of the weak solution is not sufficient for this procedure and we need to regularize. For this,
let $\eta_\eps$ be a standard space-time mollifier on $(0,T)\times \mathbb{T}^3$, periodic in the spatial variables. (Since we passed to the limit $(\eps,\eta)\to 0$ in the previous section, these letters are reused here without causing any notation conflict.) The mollification of a function $g$ is denoted by
\begin{align*}
  \overline{g}^\eps(t,x) = (\eta_\eps * g)(t,x), \quad
  (t,x)\in(0,T)\times\mathbb{T}^3.
\end{align*}
Next, we define the continuous cutoff function
\begin{align*}
  \phi_m(y) = \begin{cases}
  0 &\mbox{for }0\le y\le 1/(2m), \\
  2my-1 &\mbox{for }1/(2m)\le y\le 1/m, \\
  1 &\mbox{for }1/m\le y\le m, \\
  2-y/m &\mbox{for }m\le y\le 2m, \\
  0 &\mbox{for }y\ge 2m,
  \end{cases}
\end{align*}
which vanishes close to zero and for large values of $y$ and equals one in the interval $[1/m,m]$. It holds that $\phi_m(y)\to 1$ as $m\to\infty$ for any fixed $y>0$. The cutoff function is used to define the approximate velocity $\widetilde{v}_m:=\phi_m(\rho)v$. We summarize the regularity obtained for the weak solution. Recall the notation \eqref{1.Skappa} for $\sqrt\rho\mathbb{S}_\kappa$ and \eqref{1.Tnu} for $\sqrt\rho\mathbb{T}_\nu$ as well as $\mathbb{S}_\nu = \frac12(\mathbb{T}_\nu + \mathbb{T}_\nu^T)$.

\begin{lemma}
Let $(\sqrt\rho,\sqrt\rho v,c)$ be a weak solution to \eqref{1a.rhoc}--\eqref{1a.v} constructed in Section \ref{sec.nse}. Then
\begin{align*}
  &\rho\in L^5(\Omega_T), &&\na\rho^\gamma\in L^{5/4}(\Omega_T;\R^3),
  && \na\phi_m(\rho)\in L^4(\Omega_T;\R^3), \\
  &\pa_t\phi_m(\rho)\in L^2(\Omega_T), 
  && v\in L^2(\Omega_T;\R^3), && \rho v\in L^{5/2}(\Omega_T;\R^3), \\
  & \rho|v|^2 v\in L^{5/4}(\Omega_T;\R^3), && 
  \rho|v|^2\in L^{5/3}(\Omega_T), 
  && \sqrt\rho|\mathbb{S}_\kappa|\in L^{5/3}(\Omega_T), \\
  &\sqrt\rho|\mathbb{S}_\nu|\in L^{5/3}(\Omega_T), &&
  \na c\in L^2(\Omega_T;\R^3), && \rho\na c\in L^{10/7}(\Omega_T;\R^3).
\end{align*}
\end{lemma}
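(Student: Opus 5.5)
The plan is to derive every integrability statement from the uniform bounds \eqref{1.regul} for the weak solution of Theorem~\ref{thm.nske}, for fixed $(\kappa,r_0,r_1)>0$. We combine these bounds with the Gagliardo--Nirenberg inequality and H\"older's inequality. The key ingredients are $\na\sqrt\rho\in L^\infty(0,T;L^2)$, $\sqrt\kappa\Delta\sqrt\rho\in L^2(\Omega_T)$, $\na\sqrt[4]{\rho}\in L^4(\Omega_T)$, $\sqrt\rho v\in L^\infty(0,T;L^2)$, $\sqrt[4]{\rho}v\in L^4(\Omega_T)$, $\sqrt{r_0}v\in L^2(\Omega_T)$, $\mathbb{T}_\nu\in L^2(\Omega_T)$, $c\in L^\infty(0,T;H^1)$, and $\rho^{\gamma/2}\in L^2(0,T;H^1)$.

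\textbf{Step 1 (density).} As in Section~\ref{sec.limitn}, $\sqrt\rho$ is bounded in $L^\infty(0,T;H^1)\cap L^2(0,T;H^2)$. Gagliardo--Nirenberg interpolation ($\|u\|_{L^{10}}\le C\|u\|_{H^2}^{1/5}\|u\|_{H^1}^{4/5}$) then gives $\sqrt\rho\in L^{10}(\Omega_T)$, that is, $\rho\in L^5(\Omega_T)$.

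\textbf{Step 2 (products built from Step 1).} Each product is handled by H\"older's inequality:
\begin{itemize}
\item $\rho v=\sqrt\rho\cdot\sqrt\rho v$ with $\sqrt\rho\in L^{10}$ and $\sqrt\rho v\in L^\infty L^2$; to reach $L^{5/2}$ I will instead write $\rho v=\rho^{3/4}\cdot\sqrt[4]\rho v$ with $\rho^{3/4}\in L^{20/3}$ and $\sqrt[4]\rho v\in L^4$.
\item $\rho|v|^2=\sqrt\rho\,(\sqrt[4]\rho v)^2$ lies in $L^{5/3}$, since $\tfrac1{10}+\tfrac12=\tfrac35$.
\item $\rho|v|^2v=\sqrt[4]\rho\,(\sqrt[4]\rho v)^3$ lies in $L^{5/4}$, since $\tfrac1{20}+\tfrac34=\tfrac45$.
\item $\sqrt\rho\,\mathbb{S}_\nu\in L^{5/3}$ by $L^{10}\times L^2$, and likewise for $\sqrt\rho\,\mathbb{S}_\kappa$, using that $\mathbb{S}_\kappa\in L^2$ because it is built from $\sqrt\kappa\na^2\sqrt\rho$ and $\sqrt\kappa|\na\sqrt[4]\rho|^2$.
\item $\rho\na c\in L^{10/7}$ by $L^5\times L^2$, since $\tfrac15+\tfrac12=\tfrac7{10}$; and $\na c\in L^2$ is immediate.
\item $v\in L^2$ follows from the drag bound $\sqrt{r_0}v\in L^2$.
\item $\na\rho^\gamma=2\rho^{\gamma/2}\na\rho^{\gamma/2}$, with $\na\rho^{\gamma/2}\in L^2$ and $\rho^{\gamma/2}$ in a suitable Lebesgue space via $\rho\in L^5$; the exponent $5/4$ needs $\gamma\le 2$ for this naive product. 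For larger $\gamma$ I would instead use the BD entropy bound for $\rho^{\gamma/2}\in L^2(0,T;H^1)$ together with Sobolev embedding, so some care in exponent bookkeeping is needed here.
\end{itemize}

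\textbf{Step 3 (cutoff terms).} Since $\phi_m$ is Lipschitz with constant $2m$ and vanishes for $\rho<1/(2m)$,
\[
\na\phi_m(\rho)=\phi_m'(\rho)\,4\rho^{3/4}\na\sqrt[4]\rho,
\]
where $\rho^{3/4}\le(2m)^{3/4}$ on the support of $\phi_m'$. Hence $\na\phi_m(\rho)\in L^4$. For the time derivative, the mass equation gives
\[
\pa_t\phi_m(\rho)=-\phi_m'(\rho)\big(\na\rho\cdot v+\rho\diver v\big).
\]
On the support of $\phi_m'$ the density is bounded above and below. There $\na\rho\cdot v=2\na\sqrt\rho\cdot\sqrt\rho v/\sqrt\rho$ and $\rho\diver v=\sqrt\rho\operatorname{tr}\mathbb{T}_\nu/\sqrt\nu$. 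The second term is in $L^2$. The first term is the main obstacle, since $\na\sqrt\rho$ lies only in $L^\infty L^2$. I would instead write $\na\rho\cdot v=4\rho^{1/2}\na\sqrt[4]\rho\cdot\sqrt[4]\rho v$, which is in $L^2$ on $\{\rho\le 2m\}$ as a product of two $L^4$ functions. The equation $\pa_t\rho=-\diver(\rho v)$ holds only in the weak sense, so the chain rule must be justified. I would do this by a renormalization (DiPerna--Lions type) argument, approximating $\phi_m$ by smooth functions.
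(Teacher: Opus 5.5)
Your proposal is correct and follows essentially the paper's route. The paper delegates every item except $\rho\na c$ to \cite[Lemma 3.3]{LaVa18}, which rests on the same energy and BD bounds combined with H\"older and Gagliardo--Nirenberg, and it obtains $\rho\na c\in L^{10/7}(\Omega_T;\R^3)$ by exactly your $L^5\times L^2$ H\"older step. The $\na\rho^\gamma$ item, which you left open for large $\gamma$, closes for every $\gamma>1$ via $\rho^{\gamma/2}\in L^\infty(0,T;L^2(\Omega))\cap L^2(0,T;L^6(\Omega))\subset L^{10/3}(\Omega_T)$, which gives $\na\rho^\gamma=2\rho^{\gamma/2}\na\rho^{\gamma/2}\in L^{5/4}(\Omega_T;\R^3)$; your naive product in fact already works for all $\gamma\le 3$, not just $\gamma\le 2$. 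Finally, the chain rule for $\pa_t\phi_m(\rho)$ needs no DiPerna--Lions argument, since \eqref{1.Tnu} gives $\pa_t\rho=-\diver(\rho v)\in L^{5/3}(\Omega_T)$.
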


\begin{proof}
The proof can be found in \cite[Lemma 3.3]{LaVa18}. It is based on the modified energy and the BD entropy inequalities, except for the chemotaxis term: It follows from $\rho\in L^5(\Omega_T)$ and $\na c\in L^2(\Omega_T;\R^3)$ that $\rho\na c\in L^{10/7}(\Omega_T;\R^3)$.
\end{proof}

We wish to derive the momentum equation satisfied by $\widetilde{v}_m=\phi_m(\rho)v$. As in \cite[Sec.~3.1]{LaVa18}, the idea is to use the mollified test function $\overline{\psi\phi_m(\rho)}^\eps$ in the momentum equation \eqref{1.wv}, where $\psi\in C_0^\infty(\Omega_T;\R^3)$ is a vector-valued function. Performing the limit $\eps\to 0$ in the resulting equation and using the commutator estimates of DiPerna and Lions \cite[Lemma 3.2]{LaVa18} leads to the following equation:
\begin{align}\label{4.aux0}
  0 &= \int_0^T\int_\Omega\big\{\rho\phi_m(\rho)v\cdot\pa_t\psi
  + \rho\phi_m(\rho)(v\otimes v):\na\psi 
  - \phi_m(\rho)\na p(\rho)\cdot\psi \\
  &\phantom{xx}
  - \phi_m(\rho)(\sqrt{\nu\rho}\mathbb{S}_\nu
  + \sqrt{\kappa\rho}\mathbb{S}_\kappa):\na\psi
  - (\sqrt{\nu\rho}\mathbb{S}_\nu
  + \sqrt{\kappa\rho}\mathbb{S}_\kappa):(\na\phi_m(\rho)\otimes\psi) 
  \nonumber \\
  &\phantom{xx}+ \rho\phi_m(\rho)\na c\cdot\psi
  - \phi_m(\rho)\frac{\rho v}{\zeta}\cdot\psi
  + \rho\pa_t\phi_m(\rho)v\cdot\psi
  + \rho (v\otimes v):(\na\phi_m(\rho)\otimes\psi) \nonumber \\
  &\phantom{xx}- r_0\phi_m(\rho)v\cdot\psi 
  - r_1\rho\phi_m(\rho)|v|^2v\cdot\psi\big\}dxdt. \nonumber 
\end{align}
We want to replace the term $\rho\pa_t\phi_m(\rho)v\cdot\psi + \rho (v\otimes v):(\na\phi_m(\rho)\otimes\psi)$ in the third line with the help of the mass equation. For this, we use \cite[Lemma 3.1]{LaVa18} to treat the mollified quantities, in particular
\begin{align}\label{4.gh}
  \int_0^\infty\int_\Omega \overline{g}^\eps h dxdt 
  = \int_0^\infty\int_\Omega g\overline{h}^\eps dxdt, \quad
  \lim_{\eps\to 0}\overline{\na(gh)}^\eps
  = \lim_{\eps\to 0}\na(g\overline{h}^\eps)
\end{align}
for suitable functions $g$ and $h$, where the limit holds in the sense of $L^q(\Omega_T)$ for any $q<\infty$. It follows from the test function $\overline{\phi'_m(\overline{\rho}^\eps)\rho\psi_i v_j}^\eps$ in \eqref{1.wrho}  (with $i,j=1,2,3$) and integration by parts that 
\begin{align}\label{4.aux}
  0 &= -\int_0^T\int_\Omega\big(
  \rho\pa_t\overline{\phi'_m(\overline{\rho}^\eps)\rho\psi_i v_j}^\eps
  + \rho v\cdot\na\overline{\phi'_m(\overline{\rho}^\eps)
  \rho\psi_i v_j}^\eps\big)dxdt \\
  &= \int_0^T\int_\Omega\big(\pa_t\overline{\rho}^\eps
  \phi'_m(\overline{\rho}^\eps)
  \rho\psi_i v_j + \overline{\diver(\rho v)}^\eps  
  \phi'_m(\overline{\rho}^\eps)\rho\psi_i v_j\big)dxdt. \nonumber 
\end{align}
Taking into account definition \eqref{1.Tnu} of $\mathbb{T}_\nu$, we have
\begin{align*}
  \sqrt{\nu\rho}\operatorname{tr}\mathbb{T}_\nu
  = \nu\diver(\rho v) - 2\nu\sqrt\rho v\cdot\na\sqrt\rho,
\end{align*}
such that \eqref{4.aux} becomes
\begin{align}\label{4.aux2}
  0 = \int_0^T\int_\Omega\bigg\{\pa_t\overline{\rho}^\eps
  \phi'_m(\overline{\rho}^\eps)
  \rho\psi_i v_j + \bigg(\overline{\sqrt{\frac{\rho}{\nu}}
  \operatorname{tr}\mathbb{T}_\nu}^\eps  
  + 2\overline{\sqrt\rho v\cdot\na\sqrt\rho}^\eps\bigg)
  \phi'_m(\overline{\rho}^\eps)\rho\psi_i v_j\bigg\}dxdt.
\end{align}
The BD entropy inequality in Lemma \ref{lem.bdent} shows that $\sqrt\rho v\cdot\na\sqrt\rho\phi'_m(\rho)\in L^{4/3}(\Omega_T)$ and $\phi'_m(\rho)\rho v_j\in L^4(\Omega_T)$. Therefore, we can pass to the limit $\eps\to 0$ in \eqref{4.aux2} to conclude that 
\begin{align*}
  0 &= \int_0^T\int_\Omega\bigg\{\pa_t\rho \phi'_m(\rho)\rho\psi_i v_j
  + \bigg(\sqrt{\frac{\rho}{\nu}}\operatorname{tr}\mathbb{T}_\nu  
  + 2\sqrt\rho v\cdot\na\sqrt\rho\bigg)\phi'_m(\rho)\rho \psi_i v_j
  \bigg\}dxdt \\
  &= \int_0^T\int_\Omega\bigg(\pa_t\phi_m(\rho)\rho\psi_i v_j
  + v\cdot\na\phi_m(\rho)\rho\psi_iv_j
  + \sqrt{\frac{\rho}{\nu}}\operatorname{tr}\mathbb{T}_\nu 
  \phi'_m(\rho)\rho \psi_i v_j\bigg)dxdt.
\end{align*}
Thus, replacing the term $\rho\pa_t\phi_m(\rho)v\cdot\psi + \rho (v\otimes v):(\na\phi_m(\rho)\otimes\psi)$ in \eqref{4.aux0} and using the definition $\widetilde{v}_m=\phi_m(\rho)v$, we obtain
\begin{align} 
  0 &= \int_0^T\int_\Omega\bigg\{
  \rho\widetilde{v}_m\cdot\pa_t\psi 
  + \rho (v\otimes\widetilde{v}_m):\na\psi 
  - \phi_m(\rho)\na p(\rho)\cdot\psi
  + \rho\phi_m(\rho)\na c\cdot\psi \nonumber \\
  &\phantom{xx}- \phi_m(\rho)(\sqrt{\nu\rho}\mathbb{S}_\nu
  + \sqrt{\kappa\rho}\mathbb{S}_\kappa):\na\psi
  - (\sqrt{\nu\rho}\mathbb{S}_\nu + \sqrt{\kappa\rho}\mathbb{S}_\kappa)
  :(\na\phi_m(\rho)\otimes\psi) \label{4.aux3} \\
  &\phantom{xx}
  - \frac{\rho\widetilde{v}_m}{\zeta}\cdot\psi
  - \sqrt{\frac{\rho}{\nu}}\operatorname{tr}\mathbb{T}_\nu
  \phi'_m(\rho)\rho v\cdot\psi - r_0\widetilde{v}_m\cdot\psi
  - r_1\rho|v|^2\widetilde{v}_m\cdot\psi\bigg\}dxdt. \nonumber 
\end{align}
We choose the test function $\psi=\overline{\phi\varphi'(\overline{\widetilde{v}_m}^\eps)}^\eps$ for some $\phi\in C_0^\infty(\Omega_T)$ and $\varphi$ as in Definition \ref{def.renorm} in \eqref{4.aux3} and pass to the limit $\eps\to 0$. Proceeding as in \cite[Sec.~3.3]{LaVa18}, in particular using \eqref{4.gh} and integration by parts, the first two terms in \eqref{4.aux3} become after a computation
\begin{align*}
  \lim_{\eps\to 0}&\int_0^T\int_\Omega\Big\{
  \rho\widetilde{v}_m\cdot\pa_t
  \big(\overline{\phi\varphi'(\overline{\widetilde{v}_m}^\eps)}^\eps\big)
  + \rho (v\otimes\widetilde{v}_m):\na
  \big(\overline{\phi\varphi'(\overline{\widetilde{v}_m}^\eps)}^\eps\big)
  \Big\}dxdt \\
  &= \int_0^T\int_\Omega\rho\big(\varphi(\widetilde{v}_m)
  \pa_t\phi + \varphi(\widetilde{v}_m)v\cdot\na\phi\big)dxdt.
\end{align*}
We wish to pass to the limit $\eps\to 0$ in the fourth and fifth terms of \eqref{4.aux3}. To this end, we observe that
\begin{align*}
  \na\widetilde{v}_m &= \na\bigg(\frac{\phi_m(\rho)}{\rho}\rho v\bigg)
  = \na\bigg(\frac{\phi_m(\rho)}{\rho}\bigg)\rho v \\
  &\phantom{xx}+ \frac{\phi_m(\rho)}{\rho}\bigg(\sqrt{\frac{\rho}{\nu}}
  \mathbb{T}_\nu + 2\sqrt\rho v\otimes\na\sqrt\rho\bigg)
  \in L^2(\Omega_T;\R^{3\times 3}).
\end{align*}
This implies that $\na\varphi'(\widetilde{v}_m) = \varphi''(\widetilde{v}_m)\na\widetilde{v}_m\in L^2(\Omega_T;\R^{3\times 3})$ and the limit $\eps\to 0$ in the fourth term of \eqref{4.aux3} becomes
\begin{align*}
  -\lim_{\eps\to 0}&\int_0^T\int_\Omega
  \phi_m(\rho)\big(\sqrt{\nu\rho}\mathbb{S}_\nu
  + \sqrt{\kappa\rho}\mathbb{S}_\kappa\big):\na\big(
  \overline{\phi\varphi'(\overline{\widetilde{v}_m}^\eps)}^\eps\big)
  dxdt \\
  &= -\int_0^T\int_\Omega
  \phi_m(\rho)\big(\sqrt{\nu\rho}\mathbb{S}_\nu
  + \sqrt{\kappa\rho}\mathbb{S}_\kappa\big):\big(
  \na\phi\otimes \varphi'(\widetilde{v}_m) 
  + \phi\na\varphi'(\widetilde{v}_m)\big)dxdt.
\end{align*}
The limit $\eps\to 0$ in the remaining terms of \eqref{4.aux3} can be performed as well, and we end up with
\begin{align}\label{4.aux4}
  0 &= \int_0^T\int_\Omega\bigg(\rho\varphi(\widetilde{v}_m)\pa_t\phi
  + \rho\varphi(\widetilde{v}_m)v\cdot\na\phi  
  - \phi_m(\rho)\na p(\rho)\cdot\varphi'(\widetilde{v}_m)\phi \\
  &\phantom{xx}- \phi_m(\rho)\varphi''(\widetilde{v}_m)
  (\sqrt{\nu\rho}\mathbb{S}_\nu
  + \sqrt{\kappa\rho}\mathbb{S}_\kappa):\na\widetilde{v}_m \phi
  \nonumber \\
  &\phantom{xx}- (\sqrt{\nu\rho}\mathbb{S}_\nu
  + \sqrt{\kappa\rho}\mathbb{S}_\kappa):(\varphi'(\widetilde{v}_m)
  \otimes\na\phi_m(\rho))\phi \nonumber \\
  &\phantom{xx}-  (\sqrt{\nu\rho}\mathbb{S}_\nu
  + \sqrt{\kappa\rho}\mathbb{S}_\kappa):(\varphi'(\widetilde{v}_m)
  \otimes\na\phi) 
  + \rho\phi_m(\rho)\na c\cdot\varphi'(\widetilde{v}_m)
  - \frac{\rho\widetilde{v}_m}{\zeta}
  \cdot\varphi'(\widetilde{v}_m)\phi \nonumber \\
  &\phantom{xx}- \sqrt{\frac{\rho}{\nu}}\operatorname{tr}\mathbb{T}_\nu
  \rho\phi'_m(\rho) v\cdot\varphi'(\widetilde{v}_m)\phi
  - r_0\widetilde{v}_m\cdot\varphi'(\widetilde{v}_m)\phi
  - r_1\rho|v|^2\widetilde{v}_m\cdot\varphi'(\widetilde{v}_m)\phi
  \bigg)dxdt. \nonumber 
\end{align}

The next step is the limit $m\to\infty$. We deduce from $\phi_m(y)\to 1$, $\phi'_m(y)\to 0$ for a.e.\ $y>0$ and from $\rho>0$ a.e.\ in $\Omega_T$ (which is a consequence of $r_0\log\rho\in L^\infty(\Omega_T)$) that
\begin{align*}
  \phi_m(\rho)\to 1, \quad \rho^{3/4}\phi'_m(\rho)\to 0
  \quad\mbox{a.e. in }\Omega_T.
\end{align*}
We infer that $\widetilde{v}_m=\phi_m(\rho)v\to v$ a.e.\ and $g(\widetilde{v}_m)\to g(v)$ a.e.\ for any $g\in W^{1,\infty}(\R^3)$. We pass to the limit $m\to\infty$ in \eqref{4.aux4}, arguing by dominated convergence. This requires the pointwise a.e.\ convergence of the integrand, which is straightforward for all terms except for $\sqrt\rho\na\widetilde{v}_m$, appearing in the fourth term on the right-hand side of \eqref{4.aux4}. We reformulate this expression:
\begin{align*}
  \sqrt\rho\na\widetilde{v}_m
  &= \sqrt\rho\na(\phi_m(\rho)v)
  = \sqrt\rho\phi'_m(\rho)\na\rho\otimes v
  + \sqrt\rho\phi_m(\rho) \na v \\
  &= 4\rho^{3/4}\phi'_m(\rho)\na\rho^{1/4}\otimes(\sqrt\rho v)
  + \phi_m(\rho)\rho^{-1/2}(\na(\rho v)-\na\rho\otimes v) \\
  &= 4\rho^{3/4}\phi'_m(\rho)\na\rho^{1/4}\otimes(\sqrt\rho v)
  + \phi_m(\rho)\nu^{-1/2}\mathbb{T}_\nu. 
\end{align*}
Then, with the previous convergence results,
\begin{align*}
  \sqrt\rho\na\widetilde{v}_m\to \nu^{-1/2}\mathbb{T}_\nu
  \quad\mbox{a.e. in }\Omega_T.
\end{align*}
Because of $\phi_m(\rho)\le 1$ and $|\rho^{3/4}\phi'_m(\rho)|\le 2$, the integrand is dominated by an integrable function. Therefore, by dominated convergence, we conclude from \eqref{4.aux4} in the limit $m\to\infty$ that
\begin{align*}
  0 &= \int_0^T\int_\Omega\bigg(\rho\varphi(v)\pa_t\phi
  + \rho\varphi(v)v\cdot\na\phi - \na p(\rho)\varphi'(v)\phi \\
  &\phantom{xx}- (\sqrt{\nu\rho}\mathbb{S}_\nu 
  + \sqrt{\kappa\rho}\mathbb{S}_\kappa):(\varphi'(v)\otimes\na\phi)
  + \rho\na c\cdot\varphi'(v)\phi \\
  &\phantom{xx}- \frac{\rho v}{\zeta}\cdot\varphi'(v)\phi
   - r_0v\cdot\varphi'(v)\phi - r_1\rho|v|^2 v\cdot\varphi'(v)\phi
  \bigg)dxdt - \langle R_\varphi,\phi\rangle,
\end{align*}
where
\begin{align*}
  \langle R_\varphi,\phi\rangle
  = \frac{1}{\sqrt\nu}\int_0^T\int_\Omega
  \varphi''(v)\phi(\sqrt{\nu\rho}\mathbb{S}_\nu 
  + \sqrt{\kappa\rho}\mathbb{S}_\kappa):\mathbb{T}_\nu dxdt.
\end{align*}

It remains to verify formula \eqref{1.T} for $\mathbb{T}_\nu$. We multiply the $(j,k)$th component of \eqref{1.Tnu} by $\overline{\phi_m(\rho)\phi(\pa\varphi/\pa v_i)(v)}^\eps$ and pass to the limit $\eps\to 0$:
\begin{align*}
  \int_0^T&\int_\Omega\sqrt{\nu\rho}(\mathbb{T}_\nu)_{jk}
  \phi_m(\rho)\frac{\pa\varphi}{\pa v_i}(v)\phi dxdt \\
  &= \nu\int_0^T\int_\Omega\bigg(\frac{\pa}{\pa x_j}(\rho v_k)
  \phi_m(\rho)\frac{\pa\varphi}{\pa v_i}(v)\phi
  - 2\sqrt\rho v_k\frac{\pa\sqrt\rho}{\pa x_j}  
  \phi_m(\rho)\frac{\pa\varphi}{\pa v_i}(v)\phi\bigg)dxdt \\
  &= \nu\int_0^T\int_\Omega\bigg\{\frac{\pa}{\pa x_j}
  \bigg(\rho v_k\phi_m(\rho)\frac{\pa\varphi}{\pa v_i}(v)\bigg)\phi
  - \rho v_k\frac{\pa}{\pa x_j}\bigg(\phi_m(\rho)
  \frac{\pa\varphi}{\pa v_i}(v)\bigg)\phi \\
  &\phantom{xx}- 2\sqrt\rho v_k\frac{\pa\sqrt\rho}{\pa x_j}  
  \phi_m(\rho)\frac{\pa\varphi}{\pa v_i}(v)\phi\bigg\}dxdt \\
  &= -\nu\int_0^T\int_\Omega\bigg(\rho v_k\phi_m(v)
  \frac{\pa\varphi}{\pa v_i}(v)\frac{\pa\phi}{\pa x_j}
  - 4\rho\phi'_m(\rho)\sqrt{\rho}v_k\sqrt[4]{\rho}
  \frac{\pa\sqrt[4]{\rho}}{\pa x_j}\frac{\pa\varphi}{\pa v_i}(v)\phi \\
  &\phantom{xx}- \sum_{\ell=1}^3 \sqrt{\rho}v_k\phi_m(\rho)
  \frac{\pa^2\varphi}{\pa v_i\pa v_\ell}(v)\sqrt{\rho}
  \frac{\pa v_\ell}{\pa x_j}\phi
  - 2\sqrt\rho v_k\frac{\pa\sqrt\rho}{\pa x_j}\phi_m(\rho)
  \frac{\pa\varphi}{\pa v_i}(v)\phi\bigg)dxdt,
\end{align*}
where in the last step, we integrated by parts in the first term on the right-hand side and computed the partial derivative $\pa/\pa x_j$ in the second term. The expression $\sqrt\rho\pa v_\ell/\pa x_j$ is understood symbolically as $\nu^{-1/2}(\mathbb{T}_\nu)_{j\ell}$; see \cite[p.~205]{LaVa18}. Finally, since $\rho^{3/4}\phi'_m(\rho)\to 0$ a.e., the limit $m\to\infty$ leads to
\begin{align*}
  \int_0^T&\int_\Omega\sqrt{\nu\rho}(\mathbb{T}_\nu)_{jk}
  \phi_m(\rho)\frac{\pa\varphi}{\pa v_i}(v)\phi dxdt \\
  &= -\nu\int_0^T\int_\Omega\bigg(\rho v_k
  \frac{\pa\varphi}{\pa v_i}(v)\frac{\pa\phi}{\pa x_j} 
  - 2\sqrt\rho v_k\frac{\pa\sqrt\rho}{\pa x_j}
  \frac{\pa\varphi}{\pa v_i}(v)\phi\bigg)dxdt
  + \langle Q_\phi^{ijk},\phi\rangle,
\end{align*}
where
\begin{align*}
  \langle Q_\phi^{ijk},\phi\rangle
  = -\nu\sum_{\ell=1}^3\int_0^T\int_\Omega
  v_k\frac{\pa^2\varphi}{\pa v_i\pa v_\ell}(v)
  \sqrt{\frac{\rho}{\nu}}(\mathbb{T}_\nu)_{j\ell}\phi dxdt.
\end{align*}

\subsection{From renormalized to weak solutions}

To show that any renormalized solution is in fact a weak solution, we follow the approach in \cite[Sec.~3.2]{LaVa18}, since the chemotaxis term does not introduce any additional difficulties. For the convenience of the reader, we explain the main idea. Let $\Phi\in C^\infty(\R)$ be a nonnegative test function satisfying $\Phi(z)=1$ for $z\in[-1,1]$, $\Phi(z)=0$ for $z\in(-\infty,-2]\cup[2,\infty)$ and set $\Psi(z)=\int_0^z\Phi(s)ds$. Let $i\in\{1,2,3\}$ and define
\begin{align*}
  \varphi_n(y) = n\Psi(y_i/n)\prod_{j=1}^3\Phi(y_j/n)
  \quad\mbox{for }y\in\R^3.
\end{align*}
This function is an element of $W^{3,\infty}(\R^3)$ and fulfills condition \eqref{1.varphi} because it is compactly supported. We fix $i=1$. Then $\varphi(y)\to y_1$, $\varphi'(y)\to(1,0,0)$ a.e.\ as $n\to\infty$  and $\|\varphi''(y)\|_{L^\infty(\R^3)}\le C/n$. This bound implies that $\langle R_{\varphi_n},\phi\rangle\to 0$ and $\langle Q_{\varphi_n},\phi\rangle\to 0$. The mass and chemotaxis equations coincide in both (weak or renormalized) formulations. The limit in the momentum equation is obtained by an application of the dominated convergence theorem.

\subsection{Limit $(\kappa,r_0,r_1)\to 0$}

We consider a sequence of positive numbers $(\kappa_n,r_{0,n},r_{1,n})$ such that $(\kappa_n,r_{0,n},r_{1,n})\to 0$ as $n\to\infty$. Let $(\sqrt{\rho_n},\sqrt{\rho_n}v_n,c_n)$ be the renormalized solution to \eqref{1a.rhoc}--\eqref{1a.v} just constructed. The BD entropy inequality yields estimates that are independent of $n$ and, as shown in \cite[Lemma 5.1]{LaVa18}, up to subsequences, as $n\to\infty$,
\begin{align}
  \rho_n\to\rho &\quad\mbox{strongly in }
  L^\infty(0,T;L^q(\Omega))\mbox{ for }q<\max\{3,\gamma\}, 
  \nonumber \\
  \rho_nv_n\to \rho v &\quad\mbox{strongly in }
  L^2(0,T;L^r(\Omega))\mbox{ for }r<3/2, \nonumber \\
  \rho_n^\alpha H(v_n)\to \rho^\alpha H(v) &\quad\mbox{strongly in }
  L^s(\Omega_T)\mbox{ for }s<5\gamma/(3\alpha) \label{4.c3}
\end{align}
for any bounded continuous function $H$ and $0<\alpha<5\gamma/3$. Furthermore, our uniform bounds and the Aubin--Lions lemma imply that (again up to a subsequence)
\begin{align*}
  c_n\to c\quad\mbox{strongly in }L^\infty(0,T;L^2(\Omega)).
\end{align*} 
The limit $n\to\infty$ in the mass and chemotaxis equations is now straightforward. We examine the momentum equation term by term: 

\begin{itemize}
\item By \eqref{1.varphi}, the mappings $y\mapsto\varphi(y)$ and $y\mapsto y\varphi(y)$ are bounded and continuous. Hence, we deduce from \eqref{4.c3} that, for any $\gamma>1$,
\begin{align*}
  \rho_n\varphi(v_n)\to\rho\varphi(v), \quad
  \rho_n v_n\varphi(v_n)\to \rho v\varphi(v)
  \quad\mbox{strongly in }L^2(\Omega_T).
\end{align*}
\item A subsequence of $\na\rho_n^{\gamma/2}$ converges weakly to $\na\rho^{\gamma/2}$ in $L^2(\Omega_T)$ and $\rho_n^{\gamma/2}\varphi(v_n)$ converges strongly to $\rho^{\gamma/2}\varphi(v)$ in $L^s(\Omega_T)$ for $s<10/3$ (choosing $\alpha=\gamma/2$ in \eqref{4.c3}). This shows that
\begin{align*}
  \na\rho_n^\gamma\cdot\varphi'(v_n)
  = 2\na\rho_n^{\gamma/2}\cdot(\rho^{\gamma/2}\varphi'(v_n))
  \rightharpoonup \na\rho^\gamma\cdot\varphi'(v)
  \quad\mbox{weakly in }L^1(\Omega_T).
\end{align*}
\item The weak convergence of the sequences $(\sqrt{\rho_n}\mathbb{S}_\nu\varphi'(v_n))$ and $(\rho_n\na c_n\cdot\varphi'(v_n))$ follows immediately from the previous results.
\item The sequence $(\sqrt{\rho_n}\mathbb{S}_{\kappa,n})$ is bounded in $L^2(0,T;L^{3/2}(\Omega))$, implying that
\begin{align*}
  \sqrt{\kappa_n\rho_n}\mathbb{S}_{\kappa,n}\varphi'(v_n)
  \to 0\quad\mbox{strongly in }L^2(0,T;L^{3/2}(\Omega)).
\end{align*}
\item The sequence $(\sqrt{r_{0,n}}v_n\cdot\varphi'(v_n))$ is bounded in $L^2(\Omega_T)$, and $(r_{1,n}^{3/4}\rho_n|v_n|^2 v_n)$ is bounded in $L^{4/3}(\Omega_T)$. This shows that, for a subsequence,
\begin{align*}
  r_{0,n}v_n\cdot\varphi'(v_n)\to 0 &\quad
  \mbox{strongly in }L^2(\Omega_T), \\
  r_{1,n}\rho_n|v_n|^2 v_n\cdot\varphi'(v_n)\to 0 &\quad
  \mbox{strongly in }L^{4/3}(\Omega_T).
\end{align*}
\item We conclude from the bound
\begin{align*}
  \|R_{\varphi,n}\|_{\mathcal{M}(\Omega_T)}
  \le C\big(\|\mathbb{T}_{\nu,n}\|_{L^2(\Omega_T)},
  \|\mathbb{S}_{\nu,n}\|_{L^2(\Omega_T)},
  \|\mathbb{S}_{\kappa,n}\|_{L^2(\Omega_T)}\big)
  \|\varphi''\|_{L^\infty(\R^3)}
  \le C\|\varphi''\|_{L^\infty(\R^3)}
\end{align*}
that $R_{\varphi,n}\rightharpoonup R_\varphi$ weakly* in $\mathcal{M}(\Omega_T)$ for some measure $R_\varphi$. 
\end{itemize}

It remains to show identity \eqref{1.T} for $\mathbb{T}_\nu$. We know that $\mathbb{T}_\nu$ converges weakly in $L^2(\Omega_T)$ and that
\begin{align}\label{4.q}
  \sqrt{\rho_n}\frac{\pa\varphi}{\pa v_i}(v_n)\to
  \sqrt\rho\frac{\pa\varphi}{\pa v_i}(v)
  \quad\mbox{strongly in }L^q(\Omega_T)\mbox{ for }q<10\gamma/3.
\end{align}
Therefore, the left-hand side of \eqref{1.T} converges. In a similar way, the first term on the right-hand side converges thanks to \eqref{4.c3} (and using \eqref{1.varphi}). For the last term, we use the strong convergence \eqref{4.q} and the weak convergence $\na\sqrt{\rho_n}\rightharpoonup\na\sqrt\rho$ in $L^2(\Omega_T)$. Finally, the convergence of the measure $Q_\varphi$ follows by the same arguments as in the momentum equation. This completes the proof.


\end{document}